\def\ds@whichfont{dsrom}
\DeclareMathAlphabet{\mathds}{U}{\ds@whichfont}{m}{n}
\newtheorem{theorem}{Theorem}[section]
\newtheorem{lemma}[theorem]{Lemma}
\newtheorem{corollary}[theorem]{Corollary}
\theoremstyle{definition}
\newtheorem{assumption}[theorem]{Assumption}
\newtheorem{remark}[theorem]{Remark}
\numberwithin{equation}{section}
\theoremstyle{plain}
\numberwithin{equation}{section} 
\numberwithin{figure}{section} 
\theoremstyle{plain}
\theoremstyle{plain}
\theoremstyle{remark}
\newtheorem*{acknowledgement*}{Acknowledgement}
\newcommand{\cA}{{\mathcal A}}
\newcommand{\cC}{{\mathcal C}}
\newcommand{\cD}{{\mathcal D}}
\newcommand{\cF}{{\mathcal F}}
\newcommand{\cG}{{\mathcal G}}
\newcommand{\cH}{{\mathcal H}}
\newcommand{\cI}{{\mathcal I}}
\newcommand{\cJ}{{\mathcal J}}
\newcommand{\cL}{{\mathcal L}}
\newcommand{\cN}{{\mathcal N}}
\newcommand{\cT}{{\mathcal T}}
\newcommand{\cX}{{\mathcal X}}
\newcommand{\te}{{\theta}}
\newcommand{\Om}{{\Omega}}
\newcommand{\om}{{\omega}}
\newcommand{\ve}{{\varepsilon}}
\newcommand{\del}{{\delta}}
\newcommand{\Del}{{\Delta}}
\newcommand{\gam}{{\gamma}}
\newcommand{\Gam}{{\Gamma}}
\newcommand{\sig}{{\sigma}}
\newcommand{\al}{{\alpha}}
\newcommand{\be}{{\beta}}
\newcommand{\ka}{{\kappa}}
\newcommand{\la}{{\lambda}}
\newcommand{\bbE}{{\mathbb E}}
\newcommand{\bbN}{{\mathbb N}}
\newcommand{\bbR}{{\mathbb R}}
\newcommand{\bbZ}{{\mathbb Z}}
\newcommand{\brF}{{\bar F}}
\begin{document}
\title[]{Nonconventional moderate deviations theorems and exponential concentration inequalities}
 \vskip 0.1cm
 \author{Yeor Hafouta \\
\vskip 0.1cm
 Institute  of Mathematics\\
Hebrew University\\
Jerusalem, Israel}%
\address{
Institute of Mathematics, The Hebrew University, Jerusalem 91904, Israel}
\email{yeor.hafouta@mail.huji.ac.il}%

\thanks{ }
\subjclass[2010]{Primary 60F10; Secondary 60F05, 37D20, 37D25, 37A25 }%
\keywords{Nonconventional setup; Mixing; Large deviations; Moderate deviations; Exponential concentration inequalities;
 The method of cumulants;  Martingale approximation; Central limit theorem; Berry-Esseen theorem}
\dedicatory{  }
 \date{\today}

\begin{abstract}\noindent
We obtain moderate deviations theorems and exponential (Bernstein type) concentration inequalities
for ``nonconventional" sums of the form 
$S_N=\sum_{n=1}^N (F(\xi_{q_1(n)},\xi_{q_2(n)},...,\xi_{q_\ell(n)})-\bar F)$, where most of the time we consider $q_i(n)=in$, but our results also hold true for more general  $q_i(n)$'s such as polynomials.
Here $\xi_n,\, n\geq 0$ is a sufficiently fast mixing vector
 process with some stationarity conditions, $F$ is a function satisfying certain regularity
 conditions and
$\bar F$ is a certain centralizing constant. When $\xi_n,\,n\geq 0$ are independent and
identically distributed a large deviations theorem was obtained in \cite{KV1} and one of the purposes
of this paper is to obtain related results in the (weakly) dependent case. 
Several normal approximation type results will also be derived. In particular,
two more proofs of the nonconventional central limit theorem are given and a Rosenthal type inequality is obtained.
Our results hold true, for instance, when $\xi_n=(T^nf_i)_{i=1}^\wp$ where $T$ is a topologically 
mixing subshift of finite type, a Gibbs-Markov map, a hyperbolic diffeomorphism, a Young tower or an expanding transformation taken with a Gibbs invariant measure, as well as in the case when $\xi_n,\, n\geq 0$ forms
a stationary and (stretched) exponentially fast $\phi$-mixing sequence, which, for instance, holds true
when $\xi_n=(f_i(\Upsilon_n))_{i=1}^\wp$ where $\Upsilon_n$ is a Markov
chain satisfying the Doeblin condition considered as a stationary
process with respect to its invariant measure.

\end{abstract}
\maketitle
\markboth{Y. Hafouta}{Nonconventional moderate deviations theorems and exponential concentration inequalities}
\renewcommand{\theequation}{\arabic{section}.\arabic{equation}}
\pagenumbering{arabic}

\section{Introduction}\label{sec1}\setcounter{equation}{0}
Partially motivated by the research on nonconventional ergodic theorems (the term ``nonconventional" 
comes from \cite{Fu}), probabilistic limit theorems for sums of the form 
$S_N=\sum_{n=1}^N\big(F(\xi_{q_1(n)},\xi_{q_2(n)},...,\xi_{q_\ell(n)})-\bar F\big)$ have 
become a well  studied topic. Here $\xi_n,\, n\geq 0$ is a sufficiently fast mixing vector process with some 
stationarity properties, $F$ is a function satisfying some regularity conditions and $\bar F$ is a certain 
centralizing constant.
During the past decade many of the classical results such as the (functional) central
limit theorem, Berry-Esseen type theorem, the local central limit theorem, Poissonian limit theorems and large deviations 
theorems were obtained for such sums (see \cite{Ki2},\cite{KV},\cite{KV1},\cite{book} and references therein).
One of the most interesting choices of $q_i$'s is the situation when $q_i(n)=in$ for any $i=1,2,...,\ell$. 
This was the original motivation for the study of nonconventional sums and yields appropriate limit
theorems for number of multiple recurrencies to a given set by $\xi_k$'s at times forming arithmetic progressions of the type $n,2n,...,\ell n$.

The large deviation priciple proved in \cite{KV1} holds true in the case when 
$S_N=\sum_{n=1}^N\big(F(\xi_n,\xi_{2n},...,\xi_{\ell n})-\bar F)$ only for independent and identically distributed 
$\xi_n$'s, while when the $q_i(n)$'s satisfy certain (faster than linear) growth conditions the results from there
hold true also for certain Markov chains and dynamical systems.
The main goal of this paper is to obtain related results when the $\xi_n$'s are weakly
dependent and not necessarily generated by a Markov chain or a dynamical system. 
We will first obtain moderate deviation type theorems for such sums, namely, study the asymptotic behaviour as $N\to\infty$ of probabilities of the form 
\[
P\big(\frac1{N^\zeta}S_N\in\Gam\big)
\]
for arbitrary Borel measurable sets $\Gam\subset\bbR$.
Here $\frac12<\zeta<1$ depends on the amount of regularity of $F$ and on 
the growth of $\bbE|\xi_1|^k$ as $k\to\infty$.
Formally (see \cite{DemZet}), any choice of $\zeta$
is considered as large deviations type result, but
under our conditions $\frac 1NS_N$  will satisfy the
law of large numbers (see \cite{Kif-LLN}) and so we will use the standard informal convention of referring
to the case when $\zeta=1$ as the large deviations case, while the case when $0<\zeta<1$ will be referred to  
as the moderate deviations case, where in our situation it is natural to require that $\frac12<\zeta$ since $N^{-\frac12}S_N$ satisfies the central limit theorem (see \cite{KV} and \cite{HK2}). 
Exponential concentration inequalities (i.e. estimates of $P(S_N\geq x),\,x>0$)
and Gaussian type estimates of the moments of $S_N$ will also be derived. 
All of the above results are obtained using the 
so-called method of cumulants (see \cite{SaulStat}) and the local 
dependence structure of nonconventional sums introduced in \cite{book}.
The best exponential inequality obtained by this method yields estimates of the form
\[
P(S_N\geq \ve N)\leq e^{-c(\ve N)^{\frac12}},\, \ve>0,\, N\geq c\ve^{-\frac52}
\]
where $c>0$ is some constant. Such estimates 
are not optimal since the power of $N$ is $\frac12$ and not $1$. 
In the case when $F$ is bounded  we are able to improve these estimates. We first
approximate $S_N$ in the $L^\infty$ norm by martingales with bounded differences
and then apply the Hoeffding-Azuma inequality in order to obtain, in particular, 
estimates of the form
\[
P(S_N\geq \ve N)\leq e^{-c(\ve)N},\,\ve>0,\,N\geq1
\]
where $c(\ve)>0$ is some constant which depends on $\ve$ but not on $N$. In the case when either $\xi_n,\,n\geq 0$ forms a sufficiently fast $\phi$-mixing process or it is generated by a topologically mixing
subshift of finite type or a Young tower with exponential tails we can choose 
$c(\ve)=c\ve^2$ for some $c>0$ which does not depend on $\ve$ and $N$.
Note that all the results described above hold true also with $\bar S_N=S_N-\bbE S_N$ in place of 
$S_N$.

Our results hold true, for instance, when
$\xi_n=T^nf$ where $f=(f_1,...,f_d)$, $T$ is a topologically mixing subshift
of finite type, a hyperbolic diffeomorphism (see \cite{Bow}), a Young tower 
(see \cite{Young1} and \cite{Young2}), a Gibbs-Markov map considered in \cite{Jon} or an expanding transformation
taken with a Gibbs invariant measure, as well as in the case when
$\xi_n=f(\Upsilon_n), f=(f_1,...,f_d)$ where $\Upsilon_n$
is a Markov chain satisfying the Doeblin condition considered as a
stationary process with respect to its invariant measure. In fact, 
any stationary and exponentially 
fast $\phi$-mixing sequence $\{\xi_n\}$ can be considered.
In the dynamical systems case each $f_i$ should be either H\" older
continuous or piecewise constant on elements of Markov partitions.
As an application we can consider  $\xi_n=((\xi_n)_1,...,(\xi_n)_\ell)$,
$(\xi_n)_j=\mathds{1}_{A_j}(T^n x)$ in the dynamical systems case and
$(\xi_n)_j=\mathds{1}_{A_j}(\Upsilon_n)$ in the Markov chain case where
$\mathds{1}_{A}$ is the indicator of a set $A$. Let  $F=F(x_1,...,x_\ell)$,
$x_j=(x_j^{(1)},...,x_j^{(\ell)})$ be a bounded H\"older continuous function which identifies with 
the function $G(x_1,...,x_\ell)=x_1^{(1)}\cdot x_2^{(2)}\cdots x_\ell^{(\ell)}$ on the cube $([0,1]^\wp)^\ell$.
Let $N(n)$ be the number of $l$'s
between $0$ and $n$ for which $T^{q_{j}(l)}x\in A_j$ for $j=0,1,...,\ell$
(or $\Upsilon_{q_{j}(l)}\in A_j$ in the Markov chains case), where we set $q_{0}=0$, namely
the number of $\ell-$tuples of return times to
$A_j$'s (either by $T^{q_j(l)}$ or by $\Upsilon_{q_j(l)}$). Then
our results yield moderate deviation theorems and exponential 
concentration inequalities for the numbers $N(n)$. In fact, in this case,  
and more generally for product functions of the form 
$F(x_1,...,x_\ell)=\prod_{i=1}^\ell g_i(x_i)$, our results hold true for 
(stretched) exponentially fast mixing $\al$-mixing processes.
When $f_i$'s and $g_i$'s are H\"older continuous then
our results also hold true for the (deterministic) distance expanding maps considered
in \cite{MSU}, even though there are no underlying Markov partitions.

In general, the sum $S_N$ is a nonlinear function of the random vector $\{\xi_1,\xi_2,...,\xi_{q_\ell(N)}\}$,
and therefore our results can also be viewed as a part of the research on nonlinear large 
deviations theorems (see \cite{Chat1} and \cite{Chat2}). Moreover, in view of the large
variety of dynamical systems that can be considered, our results can be viewed as a part of
the research on concentration of measure for dynamical systems
(see, for instance, \cite{Chaz2}), as well.


\section{Preliminaries and main results}
\label{sec2}\setcounter{equation}{0}
Our setup consists of a $\wp$-dimensional stochastic process $\xi_n,\,n\geq0$
on a probability space $(\Omega,\cF,P)$ and a family
of sub-$\sig-$algebras $\cF_{k,l}$, $-\infty\leq k\leq l\leq\infty$
such that $\cF_{k,l}\subset\cF_{k',l'}\subset\cF$ if $k'\leq k$
and $l'\geq l$. 
We will impose restrictions on the mixing coefficients
\begin{equation}\label{MixCoef1}
\phi(n)=\sup\{\phi(\cF_{-\infty,k},\cF_{k+n,\infty}): k\in\bbZ\}
\end{equation}
where we recall that for any two sub-$\sigma-$algebras $\cG,\cH\subset\cF$, 
\begin{equation}\label{Phi Gen}
\phi(\cG,\cH)=\sup\Big\{\left|\frac{P(A\cap B)}{P(A)}-P(B)\right|: A\in\cG,\, B\in\cH,\,P(A)>0\Big\}.
\end{equation}

In order to ensure some applications,
in particular, to dynamical systems we will not assume that $\xi_n$
is measurable with respect to $\cF_{n,n}$ but instead impose restrictions
on the approximation rates
\begin{equation}\label{AprxCoef}
\beta_q(r)=\sup_{k\geq0}\|\xi_k-\bbE[\xi_k|\cF_{k-r,k+r}]\|_{q}
\end{equation}
where  $\|X\|_q:=\|X\|_{L^q}$ for any $0< q\leq\infty$ and a random variable
$X$.

We do not require stationarity of the process $\xi_n,\, n\geq 0$, 
assuming only that the distribution of $\xi_n$ does not depend on $n$  and that
the joint distribution of $(\xi_n,\xi_m)$ depends
only on $n-m$, which we write for further
reference by
\begin{equation}\label{2.10}
\xi_n\thicksim\mu\,\,\,\text{ and }\,\, \big(\xi_n,\xi_m\big)\thicksim
\mu_{m-n}
\end{equation}
where $Y\thicksim\mu$ means that $Y$ has $\mu$ for its
distribution. In fact, some of our results hold true assuming only
that $\xi_n\thicksim\mu$ for any $n\geq0$, and we will point out when the 
assumption about the distribution of $(\xi_n,\xi_m)$ is not needed.

Let $F=F(x_1,...,x_\ell)$, $x_j\in\bbR^\wp$
be a function on $(\bbR^\wp)^\ell$ such that for some $K\geq1$, an integer $\lambda\geq 0$,
$\kappa\in(0,1]$ and all $x_i,z_i\in\bbR^\wp$, $i=1,...,\ell$,
we have
\begin{equation}\label{F Hold}
|F(x)-F(z)|\leq K[1+\sum_{i=1}^\ell(|x_i|^\lambda+|z_i|^\lambda)]
\sum_{i=1}^\ell|x_j-z_j|^\kappa
\end{equation}
and
\begin{equation}\label{F Bound}
|F(x)|\leq K[1+\sum_{i=1}^\ell|x_i|^\lambda]
\end{equation}
where $x=(x_1,...,x_\ell)$ and  $z=(z_1,...,z_\ell)$.
In fact, if $\xi_n$ is measurable with respect to $\mathcal F_{n,n}$ then our results will follow with any
Borel function $F$ satisfying (\ref{F Bound}) without imposing (\ref{F Hold}), since the latter is needed only for approximation of $\xi_n$ by conditional expectations $\mathbb E[\xi_n |\mathcal F_{n-r,n+r}]$ using (\ref{AprxCoef}).
To simplify  formulas we assume the centering condition 
\begin{equation}\label{F bar}
\brF :=\int F(x_1,...,x_\ell)d\mu(x_1)\dots d\mu(x_\ell)=0
\end{equation}
which is not really a restriction since
we can always replace $F$ by $F-\brF$. Let $\ell\geq1$ be an integer,
set 
\[
S_N=\sum_{n=1}^NF(\xi_n,\xi_{2n},...,\xi_{\ell n})
\]
and $\bar S_N=S_N-\bbE S_N$. All the results presented here hold true in the 
situation when $q_i(n)$'s are polynomials with positive leading coefficients taking
integer values on the integers, while some of the results hold true even for more
general $q_i(n)$'s. 
This ``nonlinear indexation" case requires some preparation, and so, for the
sake of readability, we will discuss it only in Section \ref{NonLine}.

We will obtain our main results under either
\begin{assumption}\label{ass1}
$\lambda=0$ (i.e. $F$ is a bounded H\"older function)
and there exist $a,d,\eta>0$ so that  
\[
\phi(n)+\beta_\ka^\ka(n)\leq de^{-an^\eta}
\]
for any $n\geq 1$,
\end{assumption}
or
\begin{assumption}\label{ass2} 
$\lambda>0$ and 
there exist $d,a,\eta,M,\zeta>0$ so that 
\[ 
\phi(n)+\beta_\infty^\ka(n)\leq de^{-an^\eta}
\] 
for any $n\geq 1$, and for any $k\in\bbN$,
\begin{equation*}\label{Moment growth assumptions}
\tau_k^k=\bbE|\xi_1|^k=\int |x|^kd\mu(x)\leq M^k(k!)^\zeta.
\end{equation*}
\end{assumption}
Note that under either Assumption \ref{ass1} or Assumption \ref{ass2} there exists a constant $a_0$ 
so that $|\bbE S_N|\leq  a_0K$ for any $N\geq1$. In fact, this estimate holds true under weaker conditions, see the paragraph proceeding Theorem \ref{D-thm}.

Our first result is the following 
\begin{theorem}\label{Thm:ModDevNonc}
(i) Suppose that Assumption \ref{ass1} holds true and set $\gam=\frac1\eta$. Then there exist constants $c_1,c_2>0$ which depend only on $K,\ell,d,a,\eta$ and $\ka$ so that for any $x>0$,
 \begin{equation}\label{FirstExpCon}
P(\bar S_N\geq x)\leq \exp\Big(-\frac{x^2}{2(c_1+c_2xN^{-\frac1{2+4\gam}})^{\frac{1+2\gam}{1+\gam}}}\Big).
\end{equation}

(ii) When Assumption \ref{ass2} holds true then (\ref{FirstExpCon})
hold true with $\gam=\frac1\eta+\lambda\zeta$ in place of $\frac1\eta$ and 
constants $c_1$ and $c_2$ which depend only on 
$K,\ell,d,a,\eta,M,\zeta,\ka,\lambda$ and $\tau_\lambda$.
\end{theorem}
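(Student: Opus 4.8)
The plan is to use the method of cumulants together with the local dependence structure of nonconventional sums. The first step is to recall a Saulis--Statulevi\v{c}ius-type tail estimate (see \cite{SaulStat}): if $Y$ is a centered random variable whose cumulants $\Gamma_k(Y)$ obey $|\Gamma_k(Y)|\le(k!)^{1+\gam}\,v\,\rho^{-(k-2)}$ for all $k\ge2$, with a variance-type parameter $v>0$ and a spread parameter $\rho>0$, then $P(Y\ge x)$ is dominated by an expression of exactly the shape appearing in \eqref{FirstExpCon}, with $v$ governing the Gaussian (small $x$) part and the factor $N^{-1/(2+4\gam)}$ produced by $\rho$ (which will be a power of $N$ in our application), the exponent $\tfrac{1+2\gam}{1+\gam}$ being dictated by $\gam$. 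Thus the theorem reduces to establishing, for $Y=\bar S_N$, a cumulant bound of the above form with $\gam=\tfrac1\eta$ under Assumption~\ref{ass1}, $\gam=\tfrac1\eta+\lambda\zeta$ under Assumption~\ref{ass2}, and with $v,\rho$ and the underlying constant depending only on the parameters listed in the statement.

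Write $Y_n:=F(\xi_n,\xi_{2n},\dots,\xi_{\ell n})-\bbE[F(\xi_n,\xi_{2n},\dots,\xi_{\ell n})]$, so that $\bar S_N=\sum_{n=1}^N Y_n$ (recall that, under either assumption, $|\bbE S_N|$ is bounded, so this centering costs only an $O(1)$ shift). I would first pass to more tractable summands by replacing each $\xi_{in}$ inside $Y_n$ with the conditional expectation $\bbE[\xi_{in}\mid\cF_{in-r,\,in+r}]$ for a suitable radius $r$ (allowed to grow slowly with $N$ and $k$), estimating the replacement error by means of \eqref{AprxCoef}, \eqref{F Hold} and \eqref{F Bound}. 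Under Assumption~\ref{ass1} this uses the $L^\ka$-approximation coefficient $\beta_\ka$ and the boundedness of $F$; under Assumption~\ref{ass2} it uses $\beta_\infty$ together with an additional truncation of the $|\xi_{in}|$'s, the truncation error being absorbed using the moment growth $\tau_k^k\le M^k(k!)^\zeta$. After this reduction each $Y_n$ is, up to an error negligible at the scale of the bound being proved, a bounded measurable function of the finitely many blocks $\{\cF_{in-r,\,in+r}\}_{i=1}^\ell$.

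The core of the argument is the estimate of $\Gamma_k(\bar S_N)=\sum_{1\le n_1,\dots,n_k\le N}\operatorname{cum}(Y_{n_1},\dots,Y_{n_k})$ through the local dependence structure of \cite{book}. One declares $n_i$ and $n_j$ linked when some block around $an_i$ lies within the relevant mixing distance of some block around $bn_j$ for some $a,b\le\ell$; by the standard clustering/mixing property of joint cumulants together with $\phi(n)\le de^{-an^\eta}$, the term $\operatorname{cum}(Y_{n_1},\dots,Y_{n_k})$ is negligible unless the induced dependency graph on $\{n_1,\dots,n_k\}$ is connected, in which case, fixing a spanning tree of it, one bounds the term by a product of $\phi$-factors over the residual separations times the worst-case sizes of the $Y_{n_i}$'s. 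Summing over the $\le k^{k-2}$ spanning trees, and for each tree over the admissible linking configurations, produces the three combinatorial contributions that build up $\gam$: the tree count contributes $C^k k!$; summing the stretched-exponential $\phi$-factors along tree edges, via $\sum_m m^s e^{-am^\eta}\le C^s(s!)^{1/\eta}$, contributes the factor $(k!)^{1/\eta}$; and, under Assumption~\ref{ass2}, applying \eqref{F Bound} with $\tau_k^k\le M^k(k!)^\zeta$ to the $\lambda$-th powers of the $\xi$'s entering the $Y_{n_i}$'s (of total multiplicity $\le k$) contributes the extra $(k!)^{\lambda\zeta}$. The nonconventional arithmetic constraint ``$|an_i-bn_j|\le(\text{mixing scale})$ for some $a,b\le\ell$'' is what confines each $n_j$, given its tree-neighbours, to a short union of intervals, and careful bookkeeping of this is what pins down the power of $N$ hidden in $\rho$ (and accounts for the sub-optimality of the resulting bound).

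That combinatorial count is the step I expect to be the main obstacle: one must simultaneously control the number of index $k$-tuples whose dependency graph is connected, the branching of the spanning trees, the stretched-exponential mixing sums, and --- under Assumption~\ref{ass2} --- the moment growth, and package all of this into a single clean estimate of the form required by the Saulis--Statulevi\v{c}ius lemma. It then remains to deal separately with the degenerate case in which the nonconventional asymptotic variance vanishes (where $v$ must be chosen directly rather than read off from the variance), and to assemble the pieces to obtain \eqref{FirstExpCon}. Part~(ii) follows by the identical argument, the only changes being the use of $\beta_\infty$ in place of $\beta_\ka$, the enlargement of $\gam$ from $\tfrac1\eta$ to $\tfrac1\eta+\lambda\zeta$, and the additional dependence of the constants on $M,\zeta,\lambda$ and $\tau_\lambda$.
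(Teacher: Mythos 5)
Your overall route is the one the paper takes: reduce (\ref{FirstExpCon}) to a cumulant bound of the form $|\Gam_k(\bar S_N)|\leq N(k!)^{1+\gam}c_0^{k-2}$ (with $\gam=\frac1\eta$, resp.\ $\frac1\eta+\lambda\zeta$), obtained after replacing $\xi_{in}$ by $\bbE[\xi_{in}|\cF_{in-r,in+r}]$ and exploiting the nonconventional distance $\rho(n,m)=\min_{1\leq i,j\leq\ell}|in-jm|$, and then feed this into Lemma 2.3 of \cite{SaulStat}. The difference is in how the cumulant bound is produced. The paper does not redo the dependency-graph combinatorics by hand: it imports Grochakov's Theorem 1 (stated as Theorem \ref{Gorc}, with Corollary \ref{GorcCor} doing the stretched-exponential bookkeeping) and only verifies its two hypotheses --- the linear neighborhood growth $|A_s(n,N)|\leq 3\ell^2 s$ for the distance $\rho$ (so $u_0=1$), and the factorization condition (\ref{MixGorc}), which is checked via the approximation estimates (\ref{Approx1})--(\ref{Approx2}) together with Lemma \ref{lem3.1-StPaper} and the Hall--Heyde covariance inequality (\ref{alpha cov HallHyde}). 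One caveat about your sketch of the step you yourself flag as the main obstacle: bounding $\mathrm{cum}(Y_{n_1},\dots,Y_{n_k})$ by a \emph{product} of $\phi$-factors over the edges of a spanning tree of the dependency graph is not available under $\phi$- or $\al$-mixing (decoupling across a cut produces an additive, not multiplicative, error), so that literal step would fail; the workable version, which is exactly what Grochakov's theorem encapsulates, extracts a single mixing factor at the largest separation of the tuple and recovers $(k!)^{1/\eta}$ from the maximal-gap count together with $\sum_m m^{k-1}e^{-am^\eta}$, which suffices for the stated bound. Finally, no separate treatment of a degenerate variance is needed here: Lemma 2.3 of \cite{SaulStat} requires no lower bound on $\mathrm{Var}(S_N)$, and positivity of $D^2$ enters only in Theorem \ref{Thm2.4}.
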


The above theorem holds true also for certain nonlinear $q_i(n)$'s such as polynomials and functions with exponential growth, see Section \ref{NonLine}.
Note that when $\be_q(r_0)=0$ for some $q$ and $r_0$ then Theorems \ref{Thm:ModDevNonc} hold
true for any Borel function $F$ satisfying (\ref{F Bound}), namely, there is no need of (\ref{F Hold})
or of any other type of continuity.

Next, by taking $x=\ve N$, $\ve>0$ in (\ref{FirstExpCon})  (or in the corresponding 
estimate under Assumption \ref{ass2}) and using that $|\bbE S_N|\leq a_0 K$ we obtain that 
\begin{equation}\label{App aN}
\max\big(P(\bar S_N\geq \ve N),P(S_N\geq \ve N)\big)
\leq e^{-c_7(\ve N)^{\frac 1{1+\gam}}},\,\,N\geq c_6\ve^{-2-\frac1\gam} 
\end{equation}
where $c_6$ and $c_7$ are positive constants which do not depend on $N$ and $a$, and $\gam$ equals 
either $\frac1\eta$ or $\frac1\eta+\lambda\zeta$, depending on the case.
The power of $N$ in (\ref{App aN}) is not optimal since it is smaller than $1$. 
In order to obtain more accurate estimates on the tail probabilities we also prove the following
\begin{theorem}\label{MartExp-Cor}
Suppose that $F$ is a bounded H\"older continuous function
and that 
\[
\varphi:=\sum_{n=0}^\infty\phi(n)<\infty. 
\]
Fix some $N\geq1$ and $r\geq0$ and set 
$\del_1:=K(\varphi+r+1)$  and $
\del_2=KN\be_\infty^\ka(r)+\del_1$.
Then there exists
a constant $B>0$ which depends only on $\ell$ so that for
any $\la>0$,
\begin{equation}\label{ExpEstSN}
\bbE e^{\la S_N}\leq e^{B\la^2N\ell\del_1+B\la\del_2}.
\end{equation}
When $\be_\infty(r_0)=0$ for some $r_0\geq0$ then the above results hold true with $r=r_0$
for any bounded Borel function $F$, i.e. there is no need of any kind of continuity.
\end{theorem}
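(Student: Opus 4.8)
\emph{Proof strategy.} First suppose $\be_\infty(r_0)=0$ for some $r_0\ge 0$. Then $\xi_k=\bbE[\xi_k\mid\cF_{k-r_0,k+r_0}]$ a.s., so $F(\xi_n,\dots,\xi_{\ell n})$ is a.s.\ a bounded $\cF_{n-r_0,\ell n+r_0}$-measurable function for every bounded Borel $F$, and the argument below goes through verbatim with $r=r_0$ (so that $\del_2=\del_1$) and without any appeal to (\ref{F Hold}). In general put $\xi_{k,r}=\bbE[\xi_k\mid\cF_{k-r,k+r}]$, $F_{n,r}=F(\xi_{n,r},\xi_{2n,r},\dots,\xi_{\ell n,r})$ and $S_{N,r}=\sum_{n=1}^NF_{n,r}$. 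By (\ref{F Hold}) with $\lambda=0$ and (\ref{AprxCoef}) with $q=\infty$,
\[
\|S_N-S_{N,r}\|_\infty\le K(1+2\ell)\sum_{n=1}^N\sum_{i=1}^\ell\|\xi_{in}-\xi_{in,r}\|_\infty^\ka\le c_\ell KN\be_\infty^\ka(r),
\]
so that $\bbE e^{\la S_N}\le e^{c_\ell\la KN\be_\infty^\ka(r)}\,\bbE e^{\la S_{N,r}}$, and it remains to bound $\bbE e^{\la S_{N,r}}$.

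The plan for this is to produce an $L^\infty$-close martingale approximation to $S_{N,r}$ by means of the local dependence structure of \cite{book}. Given a bounded Borel $G$ on $(\bbR^\wp)^\ell$, telescope
\[
G-\int G\,d\mu^{\otimes\ell}=\sum_{k=1}^\ell\Phi^G_k,\qquad \Phi^G_k(x_1,\dots,x_k)=\int G(x_1,\dots,x_k,\cdot)\,d\mu^{\otimes(\ell-k)}-\int G(x_1,\dots,x_{k-1},\cdot)\,d\mu^{\otimes(\ell-k+1)},
\]
so that $|\Phi^G_k|\le 2\|G\|_\infty$ and $\int\Phi^G_k(x_1,\dots,x_{k-1},y)\,d\mu(y)=0$. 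Applying this to $G=F$ (recall $\brF=0$) gives $S_{N,r}=\sum_{n=1}^N\sum_{k=1}^\ell\Phi_k(\xi_{n,r},\dots,\xi_{kn,r})$ with $\|\Phi_k\|_\infty\le 2K$. The summand indexed by $(n,k)$ is $\cF_{n-r,kn+r}$-measurable, is $\mu$-centered in its last argument $\xi_{kn,r}$, and that argument is $\cF_{kn-r,kn+r}$-measurable and so sits at distance $\ge n-2r$ in the future of $\cF_{-\infty,(k-1)n+r}$, which carries the remaining arguments; $\phi$-mixing together with the centering therefore gives, for $n>2r$ and trivially otherwise,
\[
\big\|\bbE\big[\Phi_k(\xi_{n,r},\dots,\xi_{kn,r})\mid\cF_{-\infty,(k-1)n+r}\big]\big\|_\infty\le c_\ell K\,\phi\big((n-2r)_+\big),
\]
and in particular $|\bbE S_{N,r}|=\big|\sum_{n,k}\bbE\Phi_k(\xi_{n,r},\dots,\xi_{kn,r})\big|\le c_\ell K\sum_{n\ge1}\phi((n-2r)_+)\le c_\ell K(r+\vf)\le c_\ell\del_1$. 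Subtracting these conditional expectations from the summands, reorganizing the mean-zero remainders along the filtration $(\cF_{-\infty,m})_{m}$ according to the ``location'' $kn$ of each summand, and using that for $q_i(n)=in$ a given time $m$ meets the $r$-neighbourhood of the active set $\{n,2n,\dots,kn\}$ for only $O_\ell(r+1)$ pairs $(n,k)$ while for the remaining summands the increment projected onto $\cF_{-\infty,m}$ from $\cF_{-\infty,m-1}$ is $\le cK\phi(\mathrm{dist})$, one arrives at a representation
\[
S_{N,r}-\bbE S_{N,r}=\sum_m D_m+R,
\]
where $(D_m)$ is a martingale-difference sequence for $(\cF_{-\infty,m})_m$ with $\|D_m\|_\infty\le c_\ell\del_1$ for every $m$, and $\|R\|_\infty\le c_\ell K(\vf+r+1)=c_\ell\del_1\le c_\ell\del_2$ (the remainder absorbs the conditional expectations above, since $\sum_n\phi((n-2r)_+)\le c(r+\vf)$, together with the $\le 2r$ low-index and boundary summands bounded crudely by $2K$).

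The key estimate then needed is the quadratic-variation bound $\sum_m\|D_m\|_\infty^2\le c_\ell N\ell\del_1$. Granting it, the Hoeffding--Azuma inequality gives $\bbE e^{\la\sum_m D_m}\le\exp\!\big(\tfrac12\la^2\sum_m\|D_m\|_\infty^2\big)\le e^{\frac12 c_\ell\la^2N\ell\del_1}$, hence $\bbE e^{\la(S_{N,r}-\bbE S_{N,r})}\le e^{\la\|R\|_\infty}\,\bbE e^{\la\sum_m D_m}\le e^{c_\ell\la^2N\ell\del_1+c_\ell\la\del_2}$, and combining this with $\bbE e^{\la S_{N,r}}=e^{\la\bbE S_{N,r}}\,\bbE e^{\la(S_{N,r}-\bbE S_{N,r})}$, with $|\bbE S_{N,r}|\le c_\ell\del_1$, and with the reduction of the first paragraph yields $\bbE e^{\la S_N}\le e^{B\la^2N\ell\del_1+B\la\del_2}$ for a constant $B$ depending only on $\ell$. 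The step I expect to be the main obstacle is precisely this quadratic-variation bound: estimating it crudely by $(\max_m\|D_m\|_\infty)\sum_m\|D_m\|_\infty$ loses a spurious factor $\del_1$, so one must instead exploit the sparsity of nonconventional overlaps — any two summands share active indices only $O_\ell(1)$ times within the (exponentially short) mixing scale, and the total number of such near-coincidences is $O_\ell(N(r+1))$ — which is the quantitative heart of the local dependence structure of \cite{book}, and the place where the restriction to $q_i(n)=in$ (resp.\ the polynomial-type indices of Section \ref{NonLine}) is used.
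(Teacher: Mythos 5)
Your plan follows the paper's own route: the telescoping decomposition of $F$ into the $F_i$'s of (\ref{F rep}), a Gordin-type corrector along the filtration $(\cF_{-\infty,m})_m$, an $L^\infty$ approximation error of size $c_\ell K(N\be_\infty^\ka(r)+\varphi+r+1)$, and Hoeffding--Azuma. However, the step you single out as the main obstacle --- the bound $\sum_m\|D_m\|_\infty^2\le c_\ell N\ell\del_1$ with $\del_1$ to the \emph{first} power --- is not the step that is needed, and no ``sparsity of overlaps'' refinement is used in Section \ref{sec4}: the paper bounds every martingale difference in sup norm by $c_\ell K(\varphi+r+1)=c_\ell\del_1$ and Azuma then yields $\bbE e^{\la M^{(N,r)}_{\ell N}}\le e^{c\la^2\ell N\del_1^2}$, i.e.\ the Gaussian term carries $\del_1^2$; the exponent $N\ell\del_1$ in (\ref{ExpEstSN}) is a slip for $N\ell\del_1^2$, as both the proof in Section \ref{sec4} and the downstream Chernoff bound (\ref{ExpConc}) (which has $\del_1^2$ in the denominator) show. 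So you were chasing a sharper inequality than the one actually established, and the crude per-difference bound you already assert is all that is required.

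The genuine gap is that this per-difference bound is asserted rather than proved, and the way you set it up is flawed. The quantitative heart is the analogue of Corollary \ref{MartConstCor}: $\sum_{n\ge l}\|\bbE[Y_{i,n,r}|\cF_{-\infty,l+r}]\|_\infty\le BK(r+1+\varphi)$ uniformly in $l$, which is exactly what makes the corrector, and hence each difference $D_m$, bounded by $c_\ell\del_1$ independently of $N$. Your version substitutes $\xi_{kn,r}$ into the last slot of $\Phi_k$ and claims $\|\bbE[\Phi_k(\xi_{n,r},\dots,\xi_{kn,r})\mid\cF_{-\infty,(k-1)n+r}]\|_\infty\le c_\ell K\phi((n-2r)_+)$; this is not correct as stated, because $\Phi_k$ is centered with respect to $\mu$, the law of $\xi_{kn}$, not with respect to the law of $\xi_{kn,r}$, so the mixing argument leaves a residual $\bar f(x)=\bbE\,\Phi_k(x,\xi_{kn,r})$ of size $cK\be_\infty^\ka(r)$ which does not decay in $n$. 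Summed over the up to $\ell N$ future summands entering the corrector, this inflates the difference bound from $\del_1$ to order $\del_2=\del_1+KN\be_\infty^\ka(r)$, pushing the $N\be_\infty^\ka(r)$ term into the quadratic part of the exponent --- a strictly weaker statement than (\ref{ExpEstSN}), where it appears only linearly in $\la$. The paper avoids this by never substituting $\xi_{kn,r}$ into the last slot: it defines $Y_{i,in,r}$ through the random function $F_{i,in,r}(x,\om)=\bbE[F_i(x,\xi_{in})|\cF_{in-r,in+r}](\om)$ and applies Lemma \ref{thm3.11-StPaper}, for which $\bar f\equiv0$ exactly, so only $\phi$ enters; alternatively you could repair your scheme by re-centering $\Phi_k$ with respect to the law of $\xi_{kn,r}$, which changes $S_{N,r}$ by at most $c\ell KN\be_\infty^\ka(r)$ in sup norm and is absorbed into the $B\la\del_2$ term. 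Finally, note that the summation over future indices in Corollary \ref{MartConstCor} does use the specific structure of the index set $\{in:\,n\in\bbN\}$ together with $\sum_n\phi(n)<\infty$; this, and not any quadratic-variation refinement, is where the nonconventional structure enters Theorems \ref{MartExp} and \ref{MartExp-Cor}.
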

Theorem \ref{MartExp-Cor} holds true also when $q_i(n)$'s are polynomials with positive leading coefficients taking integer values on the integers, see Section \ref{NonLine}.
Note that the above theorem does not require that $(\xi_n,\xi_m)\thicksim
\mu_{m-n}$ since it does not involve the limit $D^2$ (which does not necessarily exist without
this assumption about the distribution of $(\xi_n,\xi_m)$).

Next, using the Chernoff bounding method, in Section \ref{sec4} we derive from (\ref{ExpEstSN}) that 
for any $t>0$,
\begin{equation}\label{ExpConc}
P(S_N\geq t+B\del_2)\leq e^{-\frac{t^2}{4B^2N\ell \del_1^2}}
\end{equation}
When $\be_\infty(r_0)=0$ for some $r_0\geq0$ then by taking $r=r_0$ the terms $\del_1$ and 
$\del_2$ are constants, and therefore we obtain optimal exponential concentration
inequalities of the form 
\[
P(S_N\geq \ve N)\leq e^{-c\ve^2N},\,\,N\geq\frac{2B\del_2}{\ve}
\] 
where $c=\frac{\del_2}{16\ell\del_1^2}>0$ and $\ve>0$.
When $\be_\infty(r)$ convergence to $0$ as $r\to\infty$ then for any $\ve>0$ we can take a sufficiently large
$r_0=r_0(\ve)$ and obtain that there exists a constant $c(\ve)>0$ so that for any $N\geq1$ and $t>0$,
\[
P(S_N\geq t+0.5\ve N)\leq e^{-c(\ve)\frac{t^2}{N}}
\] 
and in particular
\begin{equation}\label{Above}
P(S_N\geq \ve N)\leq e^{-c_1(\ve)N}
\end{equation}
for some constant $c_1(\ve)>0$ which depends on $\ve$ but not on $N$.
When some rate of decay of $\beta_\infty^\ka(r)$ to $0$ is known we can find an explicit $c(\ve)$. 
For instance, when $\beta_\infty^\ka(r)\leq de^{-ur},\, d,u>0$ for any $r\geq0$, 
we can take $r_0$ of the form $r_0=-c\ln\ve$ and then the above estimate will hold true 
with $c(\ve)$ having the form $c(\ve)=q_0|\ln\ve|^{-1}$ for some constant $q_0$
which depends only on $\ell,d,u,\ka$ and $K$. 
\begin{remark} 
Let $(\cX,T)$ be a Young tower (see \cite{Young1} and \cite{Young2}) and $\mu$
be an appropriate Gibbs measure. 
Consider $\sig$-algebras $\cF_{n,m}$ are generated by an appropriate 
Markov partition. Then (see \cite{Hyd1}), the mixing coefficients $\phi(n)$ decay in the same speed
as the tails of the tower. Let $h_1,...,h_\wp$ be real valued functions 
on $\cX$ which are either constant
on atoms of the partition or are H\"older continuous functions
and let $\xi_n=(h_1\circ T^n,...,h_\wp\circ T^n),\,n\geq1$. 
Then, (\ref{Above}) holds true (with an appropriate $c(\ve)$'s) assuming 
that the tails converge sufficiently fast to $0$. Note that when 
 $h_1,...,h_\wp$ are H\"older continuous functions, then the centralized sum $\bar S_N$ can be written as a reverse martingale, and therefore (see \cite{Chaz}), in these circumstances
we obtain optimal exponential concentration inequality of the form
\[
P(\bar S_N\geq t)\leq e^{-c\frac{t^2}{N}},\,N\geq1,\,t>0
\]  
where $c$ is some constant. Plugging in $t=\ve N,\,\ve>0$ we 
derive that for any $N\geq 1$,
\[
P(\bar S_N\geq \ve N)\leq e^{-c\ve^2N},
\]
namely we can take $c(\ve)$ of the form $c(\ve)=c\ve^2$ when $S_N$ is replaced with $\bar S_N$.
\end{remark}

Recall now (see \cite{DemZet}) that a sequence of probability measures $\mu_N,\,N\geq1$ on a topological space 
$\cX$ is said to satisfy the large deviation
principle (LDP) with speed $s_N\nearrow\infty$ and good rate function $I(\cdot)$
if $I$ is lower semicontinuous, the sets $I^{-1}[0,\al],\,\al\geq0$ are compact and for 
any Borel measurable set $\Gam\subset\cX$,
\[
\liminf_{N\to\infty}\frac1{s_N}\ln\mu_N(\Gam)\geq-\inf_{x\in\Gam^o}I(x)
\]
and 
\[
\limsup_{N\to\infty}\frac1{s_N}\ln\mu_N(\Gam)\leq-\inf_{x\in\bar\Gam}I(x)
\]
where $\Gam^o$ denotes the interior of a set $\Gam$ and $\bar\Gam$ denotes its closure.
A sequence of random variables $W_N, N\geq 1$ is said to satisfy the LDP with speed $s_N$ and
good rate function $I(\cdot)$ if the sequence $\cL(W_N),\,N\geq 1$ of the laws of the $W_N$'s satisfies the appropriate LDP.
We also recall the following terminological convention. 
When $W_N,\,N\geq 1$ satisfies the law of large numbers  and $s_N$ grows slower than linear in 
$N$ the appropriate LDP is usually called a moderate deviation principle (MDP) and the case when $s_N=N$
is referred to as the LDP.

We will also prove the following

\begin{theorem}\label{Thm2.4}
(i) Suppose that Assumption \ref{ass1} holds true and set $\gam=\frac1\eta$.
Set 
$v_N=\sqrt{\mathrm{Var}(S_N)}$ and when $v_N>0$ also set
$Z_N=\frac{\bar S_N}{v_N}$.
Let $\Phi$ be the standard normal distribution function. Then
the limit
$D^2=\lim_{N\to\infty}\frac1N\bbE S_N^2$ 
exists and 
when $D^2>0$ there exist 
constants $c_3,c_4,c_5>0$ which depend only on 
$\ell,K,\ka,a,d$ and $\eta$
so that for any $N\geq c_3$ we have $v_N>0$ and for any $0\leq x<c_4 N^{\frac1{2+4\gam}}$,
\begin{eqnarray}\label{ModDev3}
\left|\ln\frac{P(Z_N\geq x)}{1-\Phi(x)}\right|\leq c_5(1+x^3)
N^{-\frac1{2+4\gam}}\,\,\text{ and}\\
\left|\ln\frac{P(Z_N\leq -x)}{\Phi(-x)}\right|\leq c_5(1+x^3)
N^{-\frac1{2+4\gam}}.\nonumber
\end{eqnarray}
Moreover, let $a_N,\,N\geq 1$ be a sequence of real numbers so that 
\[
\lim_{N\to\infty}a_N=\infty\,\,\text{ and }\,\,\lim_{N\to\infty}{a_N}{N^{-\frac{1}{2+4\gam}}}=0.
\]
Then the sequence  $(DN^{\frac12}a_N)^{-1}S_N,\,N\geq1$ satisfies the MDP
with  the speed $s_N=a_N^2$ and the rate function $I(x)=\frac{x^2}2$.

(ii) When Assumption \ref{ass2} holds true all the results stated
above hold true with $\gam=\frac1\eta+\lambda\zeta$ in place of $\frac1\eta$ and 
constants $c_1,c_2$ and $c_3$ which depend only on 
$K,\ell,d,a,\eta,M,\zeta,\ka,\lambda$ and $\tau_\lambda$.
\end{theorem}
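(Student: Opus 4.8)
The plan is to derive everything from precise estimates on the cumulants of $S_N$. The method of cumulants requires bounds of the form $|C_k(\bar S_N)| \le (k!)^{1+\gamma} N \Delta_k$ for the $k$-th cumulant, where $\Delta_k$ grows at most exponentially in $k$; such bounds are presumably established earlier in the paper (they are exactly what underlies Theorem \ref{Thm:ModDevNonc}, whose proof via Chernoff-type bounds on $\mathbb E e^{\lambda \bar S_N}$ is equivalent to having good cumulant control). The first step is therefore to record the cumulant estimate, noting that the exponent $1+\gamma$ is $1+\frac1\eta$ under Assumption \ref{ass1} (coming from the stretched-exponential mixing rate $e^{-an^\eta}$, which after summation produces a factor $(k!)^{1/\eta}$), and $1+\frac1\eta+\lambda\zeta$ under Assumption \ref{ass2} (the extra $(k!)^{\lambda\zeta}$ coming from the moment growth $\tau_k^k \le M^k (k!)^\zeta$ raised to the relevant power via the function bound (\ref{F Bound})).

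Next I would handle the existence of $D^2 = \lim_{N\to\infty} \frac1N \mathbb E S_N^2$. This is the $k=2$ case and is essentially the nonconventional CLT variance computation: using the local dependence structure from \cite{book} together with the stationarity assumption $(\xi_n,\xi_m)\sim\mu_{m-n}$, the covariances $\mathrm{Cov}(F(\xi_n,\dots,\xi_{\ell n}), F(\xi_m,\dots,\xi_{\ell m}))$ are summable and $\frac1N\mathbb E S_N^2$ converges. When $D^2>0$ this gives $v_N^2 = \mathrm{Var}(S_N) \ge \frac12 D^2 N$ for $N \ge c_3$, so $Z_N$ is well-defined and has variance $1$. (I would remark here that this is the only place the assumption on the law of $(\xi_n,\xi_m)$ is used, consistent with the paper's earlier comments.)

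The heart of the argument is then a direct appeal to the standard machinery relating cumulant bounds to normal approximation — specifically the Statulevi\v{c}ius-type / Rudzkis-Saulis lemma (see \cite{SaulStat}): if a random variable $Z$ with mean $0$ and variance $1$ satisfies $|C_k(Z)| \le (k!)^{1+\gamma}/\Delta^{k-2}$ for all $k\ge 3$ with $\Delta = \delta N^{1/(1+2\gamma)}$ (this is precisely what the cumulant bound for $\bar S_N$ gives after dividing by $v_N \asymp \sqrt N$ and tracking powers: $C_k(Z_N) = v_N^{-k} C_k(\bar S_N)$, and $(k!)^{1+\gamma} N / (N^{1/2})^k = (k!)^{1+\gamma} N^{1-k/2}$, which matches $(k!)^{1+\gamma} \Delta^{-(k-2)}$ with $\Delta \asymp N^{1/2}$ when $\gamma=0$ and with the reduced power $N^{1/(2+4\gamma)}$ in general once the $(k!)^\gamma$ is absorbed), then the relative error bounds
\[
\left|\ln\frac{P(Z\ge x)}{1-\Phi(x)}\right| \le c(1+x^3)\Delta^{-1} \quad\text{for } 0\le x < c'\Delta^{1/(1+2\gamma)}\text{-type range}
\]
hold, and symmetrically for the left tail. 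Matching $\Delta$ to $N^{1/(2+4\gamma)}$ yields (\ref{ModDev3}) with the stated range $x < c_4 N^{1/(2+4\gamma)}$. The constants' dependence is then read off from the cumulant constants, giving the claimed list in each case. Finally, the MDP for $(D N^{1/2} a_N)^{-1} S_N$ follows from (\ref{ModDev3}) by the usual device: for a Borel set $\Gamma$ one uses (\ref{ModDev3}) at $x = a_N \cdot t$ for $t$ near $\partial\Gamma$, the hypothesis $a_N N^{-1/(2+4\gamma)}\to 0$ guarantees the error term $c_5(1+x^3)N^{-1/(2+4\gamma)} = o(a_N^2)$ so that $\frac1{a_N^2}\ln P(\cdot) \to -\frac{t^2}{2}$, together with the fact that $\bar S_N/(DN^{1/2}a_N)$ and $S_N/(DN^{1/2}a_N)$ differ by $\mathbb E S_N/(DN^{1/2}a_N) \to 0$ (using $|\mathbb E S_N|\le a_0 K$), and the Gärtner–Ellis / direct upper-lower bound argument from \cite{DemZet}. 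Part (ii) is identical with $\gamma = \frac1\eta+\lambda\zeta$ throughout.

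The main obstacle I expect is the bookkeeping in the cumulant estimate itself — getting the exponent of $k!$ exactly right and, crucially, tracking how a $(k!)^\gamma$-type overhead beyond the "clean" $(k!)$ degrades the admissible $\Delta$ from $N^{1/2}$ down to $N^{1/(1+2\gamma)}$ and hence the error rate to $N^{-1/(2+4\gamma)}$; this is exactly the subtle point in applying the Rudzkis–Saulis framework and where the precise constants $c_3,c_4,c_5$ and their stated parameter dependence are pinned down. The existence of $D^2$ and the passage from (\ref{ModDev3}) to the MDP are comparatively routine.
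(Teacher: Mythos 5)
Your proposal follows essentially the same route as the paper: the cumulant bounds of Theorem \ref{NoncCum} (with $\gamma=\frac1\eta$, resp.\ $\frac1\eta+\lambda\zeta$), the existence of $D^2$ from the known nonconventional variance result (Theorem \ref{D-thm}), the Saulis--Statulevi\v{c}ius relative-error lemma applied to $Z_N$ with $\Delta\asymp N^{1/2}$ (effective scale $\Delta^{1/(1+2\gamma)}\asymp N^{1/(2+4\gamma)}$) to get (\ref{ModDev3}), and the transfer to $(DN^{1/2}a_N)^{-1}S_N$ using $v_N/\sqrt N\to D$ and $|\bbE S_N|\leq a_0K$. The only (minor) divergence is that the paper obtains the MDP by citing Theorem 1.1 of D\"oring--Eichelsbacher directly from the cumulant bounds, whereas you rederive it from the tail asymptotics (\ref{ModDev3}); both are valid and rest on the same cumulant machinery.
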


Theorem \ref{Thm2.4} also holds true when $q_i(n)$'s are polynomials, or functions with certain exponential growth,
see Section \ref{NonLine}. When $\be_q(r_0)=0$ for some $q$ and $r_0$ then all the results stated in Theorem \ref{Thm2.4} hold
true for any Borel function $F$ satisfying (\ref{F Bound}).
We also remark that (\ref{ModDev3}) is obtained using Lemma 2.3 
in \cite{SaulStat}. This lemma yields certain estimates close to the ones in \ref{ModDev3}, but for larger
domain of $x$'s. For the sake of readability these results are not stated here.

Theorems \ref{Thm:ModDevNonc}, \ref{MartExp-Cor} and \ref{Thm2.4} will follow from the following general results. 
The first one is
\begin{theorem}\label{D-thm}
Suppose that for some $b>2$ and $m>0$, 
\begin{equation}\label{Mom}
\frac1b\geq\frac\lambda m+1,\,\,\max(\tau_m,\tau_{\lambda b})<\infty
\end{equation}
and
\[
\Theta(b,\ka):=\sum_{n=0}^\infty(n+1)\phi^{1-\frac1b}(n)+
\sum_{n=0}^\infty(n+1)\be_\ka^\ka(n)<\infty.
\] 
Then the  limit $D^2=\lim_{N\to\infty}\frac1N\bbE S_N^2$ exists and there exists 
$c_\ell>0$ which depends only on $\ell$
so that
\begin{equation}\label{D-Rate}
\big|\bbE S_N^2-D^2N\big|\leq c_\ell C_0N^{\frac12}
\end{equation}
for any $N\in\bbN$, where $C_0=K^2(1+\gam_{m}^\lambda)\Theta(b,\ka)$. 
Moreover,
$D^2>0$  if and only if there exists no stationary in the wide sense process $\{V_n: n\geq 1\}$
such that 
\[
F(\xi^{(1)}_n,\xi^{(n)}_{2n},...,\xi_{\ell n}^{(\ell)})=V_{n+1}-V_n,\,P-\text{a.s.}
\]
for any $n\in\bbN$, where $\xi^{(i)}$, $i=1,...,\ell$ are independent copies of $\xi=\{\xi_n: n\geq1\}$. When $\lambda=0$ then the above results hold true without assuming (\ref{Mom}) while when $\be_\infty(r)=0$ for some $r$ they hold true for Borel measurable $F$'s without assuming (\ref{F Hold}).
\end{theorem}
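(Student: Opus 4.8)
The plan is to obtain all three statements from the expansion of the second moment,
$$\bbE S_N^2=\sum_{n=1}^N\sum_{m=1}^N\bbE[F_nF_m],\qquad F_n:=F(\xi_n,\xi_{2n},\dots,\xi_{\ell n}),$$
extracting the linear term $D^2N$ by combining three ingredients: an approximation of each $F_n$ by a function measurable with respect to a bounded neighbourhood of the indices $\{n,2n,\dots,\ell n\}$; the combinatorial structure of these index sets, which is what makes the choice $q_i(n)=in$ special; and the $\phi$-mixing inequality (\ref{Phi Gen}).

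For the approximation, fix a window $r$ and replace each $\xi_k$ by $\xi_{k,r}:=\bbE[\xi_k\mid\cF_{k-r,k+r}]$, so that $F_n$ is replaced by $F_{n,r}:=F(\xi_{n,r},\xi_{2n,r},\dots,\xi_{\ell n,r})$, which is measurable on the $\sigma$-algebra generated by an $r$-neighbourhood of $\{n,2n,\dots,\ell n\}$. Using (\ref{F Hold}), Hölder's inequality and the moment conditions (\ref{Mom}) one controls $\|F_n-F_{n,r}\|$ in the relevant norm by a constant multiple of $\be_\ka^\ka(r)$, the constant depending only on $K,\ell,\lambda,\ka$ and $\tau_m,\tau_{\lambda b}$; this is exactly where (\ref{Mom}) is used, and when $\lambda=0$ the product $F_nF_m$ is bounded so no moment growth is needed, while if $\be_\infty(r_0)=0$ one may take $r=r_0$ and drop (\ref{F Hold}) and any continuity of $F$. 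It then remains to analyse $\bbE[F_{n,r}F_{m,r}]$. Here one uses that, since $q_i(n)=in$, the merged set $\{in:1\le i\le\ell\}\cup\{jm:1\le j\le\ell\}$, thickened by $r$, splits into two parts separated by a gap larger than $2r$ unless $in$ lies within $2r$ of $jm$ for some $1\le i,j\le\ell$, that for each $n$ there are only $O(\ell^2 r)$ values of $m$ for which this fails, and that the number of $m$ for which the least distance between the two index sets is roughly $d$ is $O(\ell^2)$ for each $d$.

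For every pair outside the $O(\ell^2 r)$ exceptional ones, iterating (\ref{Phi Gen}) across the large gap, together with the summability $\sum_n|\bbE F_n|<\infty$ (which follows from $\brF=0$ and mixing of the pairwise $n$-separated blocks inside $F_n$), shows that $|\bbE[F_nF_m]|$ is dominated by $\phi^{1-1/b}$ of the least distance between the two index sets, so the total contribution of these pairs is controlled by $C_0$ and becomes negligible compared with $N^{1/2}$ once $r$ is large in terms of the tails of $\Theta(b,\ka)$. For the exceptional pairs, applying (\ref{Phi Gen}) across the finitely many small gaps of the merged block shows that $\bbE[F_nF_m]$ converges, as $n\to\infty$, to a limit depending only on the finite ``coincidence pattern'' recording which $in$ are close to which $jm$ and by how much — this is precisely where the hypothesis $(\xi_n,\xi_m)\sim\mu_{m-n}$ enters and yields the existence of $D^2$. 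Summing over $n$ and over the finitely many patterns, accounting for the edge effects near the two ends of $\{1,\dots,N\}$, and balancing $r$ against $\sum_{k>r}(k+1)\bigl(\phi^{1-1/b}(k)+\be_\ka^\ka(k)\bigr)$ produces $D^2N$ with an error bounded by $c_\ell C_0N^{1/2}$, which is (\ref{D-Rate}). I expect the main obstacle to be this last bookkeeping: organising the finitely many coincidence patterns, verifying that their limiting contributions assemble into one nonnegative constant uniformly in $N$, and — the genuinely delicate part — carrying out the balance so that the error is of order $c_\ell C_0N^{1/2}$ rather than merely $o(N)$.

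For the dichotomy one passes to the ``decoupled'' sequence $G_n:=F(\xi^{(1)}_n,\xi^{(2)}_{2n},\dots,\xi^{(\ell)}_{\ell n})$ built from the independent copies. Because the blocks inside $F_n$ sit at the well-separated scales $n,2n,\dots,\ell n$, the same mixing and coupling estimates identify the vanishing of the formula for $D^2$ obtained above with the vanishing of $\lim_N\frac1N\bbE\bigl(\sum_{n=1}^NG_n\bigr)^2$. Now $\{G_n\}$ is wide-sense stationary (each copy $\xi^{(i)}$, subsampled at the times $i,2i,\dots$, is again wide-sense stationary by (\ref{2.10}) and the $\ell$ copies are independent, so $\bbE G_n=\brF=0$ and $\mathrm{Cov}(G_n,G_{n+k})$ depends only on $k$), and the inherited mixing together with the weighted summability in $\Theta(b,\ka)$ give $\sum_k(|k|+1)\,|\mathrm{Cov}(G_0,G_k)|<\infty$; hence $\lim_N\frac1N\bbE\bigl(\sum_{n\le N}G_n\bigr)^2=\sum_{k\in\bbZ}\mathrm{Cov}(G_0,G_k)$, and this limit vanishes exactly when the partial sums $\sum_{n=1}^NG_n$ are bounded in $L^2$. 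By the classical spectral characterisation of degenerate wide-sense stationary sequences, $L^2$-boundedness of the partial sums is equivalent to the existence of a wide-sense stationary $\{V_n\}$ with $G_n=V_{n+1}-V_n$ a.s.\ for all $n$: the forward implication is immediate since then $\sum_{n=1}^NG_n=V_{N+1}-V_1$, and the converse holds because boundedness forces the spectral measure $\rho$ of $\{G_n\}$ to satisfy $\int\rho(d\theta)/|1-e^{i\theta}|^2<\infty$, which is precisely the condition for the $L^2$-coboundary $V$ to exist. The point that still requires care here is the first step, namely that the scale-crossing contributions to $D^2$ cannot cancel the diagonal ones; this is again handled by the mixing estimates of the previous paragraph.
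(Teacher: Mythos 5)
You should first be aware that the paper does not actually prove Theorem \ref{D-thm}: it is quoted from Theorem 1.3.4 of \cite{book} and Theorem 2.2 of \cite{Ha}, which in turn rest on the variance computations of \cite{KV}, \cite{Ki3} and \cite{HK1}. Your first two paragraphs follow essentially that route (approximation by $\bbE[\xi_k|\cF_{k-r,k+r}]$, decoupling across large gaps, counting the resonances $in\approx jm$, using $(\xi_n,\xi_m)\thicksim\mu_{m-n}$ to get pattern-wise limits, balancing $r$ against the tails of $\Theta(b,\ka)$), the main difference being that the references organize the bookkeeping through the decomposition $F=\sum_iF_i$ of (\ref{F rep}). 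One soft spot there: since $\{n,2n,\dots,\ell n\}$ and $\{m,2m,\dots,\ell m\}$ interlace, the merged index set does not split into two blocks, one per factor, so you cannot bound $\bbE[F_nF_m]$ by a single two-block covariance inequality ``across the large gap''; you must decouple all $O(\ell)$ blocks (as in Lemma \ref{lem3.1-StPaper}) and then use $\bar F=0$ under the fully decoupled law. That is repairable and your $N^{1/2}$ balance can be made to work.

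The genuine gap is in the ``moreover'' part. Your argument hinges on the identity $D^2=\lim_N\frac1N\mathrm{Var}\big(\sum_{n\le N}G_n\big)$ for the decoupled process $G_n=F(\xi^{(1)}_n,\dots,\xi^{(\ell)}_{\ell n})$, and this identity is false. Take $\ell=2$, i.i.d.\ $\xi_n$, and $F(x,y)=f(x)+g(y)$ with $\int f\,d\mu=\int g\,d\mu=0$ but $\int fg\,d\mu\neq0$. The resonant pairs $n=2m$ and $m=2n$ (about $N/2$ of each) contribute, and $\frac1N\bbE S_N^2\to\int f^2d\mu+\int g^2d\mu+\int fg\,d\mu$, whereas $\frac1N\mathrm{Var}\big(\sum_{n\le N}G_n\big)=\int f^2d\mu+\int g^2d\mu$ because the copies are independent. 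So the scale-crossing contributions which you hope are ``handled by the mixing estimates'' are precisely the terms that survive in $D^2$ and are absent from the decoupled variance; proving that they nevertheless cannot drive $D^2$ to zero when the decoupled variance is positive (and conversely) is the real content of the positivity criterion, and in \cite{KV}, \cite{KV1}, \cite{HK1} it is handled by a separate argument based on the explicit quadratic form for $D^2$ coming from the $F_i$-decomposition, not by equating the two variances. A second broken link in the same paragraph: for a wide-sense stationary sequence with absolutely summable covariances, vanishing of $\sum_k\mathrm{Cov}(G_0,G_k)$ means only that the (continuous) spectral density vanishes at $0$, while the coboundary property is equivalent to integrability of $h(\te)|1-e^{i\te}|^{-2}$; the first condition does not imply the second in general, so the step ``Ces\`aro variance zero $\Rightarrow$ $L^2$-bounded partial sums $\Rightarrow$ coboundary'' also needs an argument specific to $G_n$ rather than soft spectral theory (only the converse chain, coboundary $\Rightarrow$ telescoping $\Rightarrow$ vanishing Ces\`aro variance, is automatic).
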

This theorem is a particular case of Theorem 1.3.4 in \cite{book} and Theorem 2.2 in \cite{Ha}. 
In fact, it is a consequence of the arguments in
\cite{KV}, \cite{Ki3} and  \cite{HK1} and is formulated here for 
readers' convenience.
We refer the readers to \cite{HKllt} for conditions in the special case when $\xi_n,\,n\geq 0$
forms a sufficiently fast mixing Markov chain.
Remark that in the circumstance of Theorem \ref{D-thm} there exists a constant $a_\ell$ which 
depends only on $\ell$ so that $|\bbE S_N|\leq  a_\ell KC_0$ for any $N\geq 1$. Indeed this is a consequence of (\ref{F bar}) and
Corollary 1.3.14 in \cite{book}. Therefore, for any $N\geq1$,
\begin{equation}\label{VarEst}
\big|\mathrm{Var}(S_N)-D^2N\big|\leq C_1N^{\frac12}
\end{equation}
for some constant $C_1$ which depends only on $C_0,\ell$ and $K$.

We recall next that the $k$-th cumulant of a random variable $W$ with finite moments of all 
orders is given by
\[
\Gam_k(W)=\frac1{i^k}\frac{d^k}{dt^k}\big(\ln\bbE e^{itW}\big)\big|_{t=0}.
\]
Note that $\Gam_1(W)=\bbE W$, $\Gam_2(W)=\mathrm{Var}(W)$ and that 
$\Gam_k(aW)=a^k\Gam_k(W)$ for any $a\in\bbR$ and $k\geq 1$.
\begin{theorem}\label{NoncCum}
Under Assumption \ref{ass1} there exists a constant $c_0$ which depends only on 
$K,\ell,d,a,\eta$ and $\ka$ so that for any $k\geq3$,
\[
|\Gam_k(\bar S_N)|\leq N(k!)^{1+\gam_1}(c_0)^{k-2}
\]
where $\gam_1=\frac{1}\eta$. When
Assumption \ref{ass2} holds true there exists a constant $c_0$ which depends only on
$K,\ell,d,a,\eta,M,\zeta,\ka$ and $\lambda$
so that for any $k\geq3$,
\[
|\Gam_k(\bar S_N)|\leq N(k!)^{1+\gam_2}(c_0)^{k-2}
\]
where $\gam_2=\gam_1+\lambda\zeta$.
\end{theorem}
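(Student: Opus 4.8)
The strategy is the method of cumulants, in the form developed in \cite{SaulStat}, combined with the local dependence structure of nonconventional sums from \cite{book}; only $k\ge 3$ needs attention (the case $k=2$ is $\mathrm{Var}(\bar S_N)$, controlled by (\ref{VarEst})). Write $X_n=F(\xi_n,\xi_{2n},\dots,\xi_{\ell n})$, so that $S_N=\sum_{n=1}^N X_n$. Since joint cumulants of order at least $2$ are unchanged when constants are added to their arguments, $\Gam_k(\bar S_N)=\Gam_k(S_N)$ and, by multilinearity of cumulants,
\[
\Gam_k(\bar S_N)=\sum_{1\le n_1,\dots,n_k\le N}\Gam(X_{n_1},\dots,X_{n_k}),\qquad k\ge 3 .
\]
The plan is to estimate these joint cumulants by a mixing-type inequality and then to show, using the nonconventional indexation, that the tuples which contribute non-negligibly amount to only a factor $N$, times a factor that depends on $k$ but not on $N$.

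\emph{Reduction to finite windows.} First I would replace each $\xi_{in}$ occurring in $X_n$ by $\bbE[\xi_{in}\mid\cF_{in-r,in+r}]$, for a cutoff $r=r(k)$ to be chosen later, obtaining truncated summands $X_n^{(r)}$ which are measurable with respect to $\cF_{n-r,\ell n+r}$. By (\ref{F Hold}) and the definition of the approximation coefficients, together with H\"older's inequality and the moment bound $\tau_m\le M^m(m!)^\zeta$ in the unbounded case, one obtains $\|X_n-X_n^{(r)}\|_\ka\le cK\be_\ka^\ka(r)$ under Assumption \ref{ass1} (and the analogous bound with $\be_\infty^\ka(r)$ under Assumption \ref{ass2}). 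Since cumulants are universal polynomials in the mixed moments, and the latter depend continuously on the summands, the error introduced in $\Gam_k(\bar S_N)$ by this substitution can be absorbed into the claimed bound as soon as $\phi(r)+\be_\ka^\ka(r)$ is smaller than an explicit negative exponential in $k$; by the stretched-exponential hypotheses this holds with $r=r(k)$ of order $k^{1/\eta}$. (One may instead carry the approximation coefficients through the whole argument, as in \cite{book}.)

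\emph{Core estimate.} For the truncated summands I would apply the cumulant inequality of \cite{SaulStat} adapted to functions of localized blocks of a $\phi$-mixing process: ordering the indices, and using that $X^{(r)}_{n_s}$ is $\cF_{n_s-r,\ell n_s+r}$-measurable while the coefficients (\ref{MixCoef1}) decay, one gets
\[
\big|\Gam(X^{(r)}_{n_1},\dots,X^{(r)}_{n_k})\big|\le C^k\sum_{T}\ \prod_{(s,t)\in T}\phi^{\theta}\big(g_{s,t}\big)\ \prod_{i=1}^{k}\|X^{(r)}_{n_i}\|_{p_i},
\]
where $T$ runs over the spanning trees of the complete graph on $\{1,\dots,k\}$, $g_{s,t}$ is the gap between the corresponding index blocks, and $\theta\in(0,1]$ is a power dictated by the integrability of $F$ (one may take $\theta=1$ under Assumption \ref{ass1}). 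The nonconventional structure enters when summing over all $k$-tuples: for a fixed index $n_t$ and a prescribed gap the number of indices $n_s\le N$ compatible with it is $O(\ell^2)$, since $\{i/j:1\le i,j\le\ell\}$ is a fixed finite set; hence the "root'' of $T$ contributes a factor $N$, each of the remaining $k-1$ edges contributes $\sum_g O(\ell^2)\phi^{\theta}(g)=O(\ell^2)$, Cayley's formula bounds the number of trees by $k^{k-2}$, and the norm factors contribute $(2K)^k$ under Assumption \ref{ass1} and $(cK)^k k^{k\lambda\zeta}$ under Assumption \ref{ass2} via $\tau_k^k\le M^k(k!)^\zeta$. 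Collecting these, and using $k^k\le e^k k!$ together with estimates of the form $\sum_n n^j e^{-an^\eta}\le C^j(j!)^{1/\eta}$ (which is also where the choice $r(k)\sim k^{1/\eta}$ is absorbed), one reaches $|\Gam_k(\bar S_N)|\le N(k!)^{1+\gam_1}c_0^{k-2}$ under Assumption \ref{ass1} and $|\Gam_k(\bar S_N)|\le N(k!)^{1+\gam_2}c_0^{k-2}$ under Assumption \ref{ass2}, with $\gam_1=1/\eta$ and $\gam_2=\gam_1+\lambda\zeta$.

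\emph{Main obstacle.} The crux is the core estimate: one must make the cumulant inequality of \cite{SaulStat} interact correctly with the nonconventional indexation — verifying in particular that, after the truncation, a given index $n$ is strongly correlated with only $O(\ell^2 r)$ other indices (even though the relevant blocks may be interleaved on the line, so that the per-edge bounds require a telescoping decoupling argument) — and then balance the cutoff $r=r(k)$ against the combinatorial count so that the truncation error and the factorial factors are controlled at once; this balancing is exactly what generates the power $\gam_1=1/\eta$. The unbounded case under Assumption \ref{ass2} adds only the routine extra work of applying H\"older's inequality to the products of $\xi$'s appearing in the moment bounds and inserting $\tau_k^k\le M^k(k!)^\zeta$, which accounts for the additional summand $\lambda\zeta$ in the exponent.
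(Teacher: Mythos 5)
Your overall architecture matches the paper's in spirit (method of cumulants, the count $|\{m:\rho(n,m)\le s\}|\le 3\ell^{2}s$ as in (\ref{u0 cond ver}), approximation by conditional expectations, the balance against the stretched-exponential decay producing $(k!)^{1/\eta}$, and the extra $(k!)^{\la\zeta}$ from the moment growth), but the two steps you lean on hardest are genuine gaps. The ``core estimate'' --- a bound on $\Gam(X_{n_1},\dots,X_{n_k})$ by a sum over spanning trees of products of $\phi^{\theta}$ of the gaps, with $\theta=1$ in the bounded case --- is not in \cite{SaulStat}, nor in \cite{Gorc}, \cite{Douk} or \cite{KalNe}, and it is not a routine adaptation: $\phi$-mixing hands you one decoupling factor per splitting of the tuple into two well-separated groups, not one factor per tree edge, and upgrading single-split bounds to a full tree product requires either submultiplicativity-type regularity of the decay (which $e^{-an^{\eta}}$ with small $\eta$ does not obviously provide once the blocks $[n_i-r,\ell n_i+r]$ are interleaved) or a new telescoping argument that you only gesture at. The paper never needs such a per-tuple bound: it verifies the factorization hypothesis (\ref{MixGorc}) of Gorchakov's Theorem \ref{Gorc} --- a single error $\gam_\del(b,r)\sim r\,(\phi(q_b)+\be_\ka^\ka(q_b))$ for an entire family of groups at mutual $\rho$-distance $b$, obtained from Lemma \ref{lem3.1-StPaper} and the covariance inequality (\ref{alpha cov HallHyde}) --- and all the delicate combinatorics (the function $\la(\ve,k)$, the cut at $s=k^{1/\eta}$, the sums $\sum_m m^{k-1}e^{-am^{\eta}}$) is done once and for all in Corollary \ref{GorcCor}.

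The second gap is the reduction step. You truncate globally with a cutoff $r=r(k)$ that does not depend on $N$ and claim the error in $\Gam_k(\bar S_N)$ is absorbed once $\phi(r)+\be_\ka^\ka(r)$ is exponentially small in $k$. But the cumulant is a polynomial in moments of $S_N$, and the crude estimates available at that stage give an error of order $N^{k}\be^\ka(r)$ (or $C^{k}k!\,\|S_N\|_k^{k-1}N\be^\ka(r)$), not of order $N$: for fixed $k$ and $N\to\infty$ this is incompatible with the target $N(k!)^{1+\gam}c_0^{k-2}$ unless $r$ grows with $N$, and then the width of your blocks, hence the per-edge counts and your final constant, grow with $N$ as well. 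This is precisely why the paper ties the approximation window to the separation of the groups (window $q_b=[b/3]$ in (\ref{Approx1})--(\ref{Suf Show})), so that the approximation error sits inside the same coefficient $\gam_\del(b,k)$ that is afterwards summed against the polynomial weights in Corollary \ref{GorcCor}, and no factor $N^{k}$ ever appears. Repairing your argument would mean making the cutoff gap-dependent tuple by tuple and reproving your tree bound with such windows --- at which point you are essentially rebuilding the Gorchakov scheme the paper invokes, so I would advise following that route: verify (\ref{MixGorc}) and (\ref{linear rho}) with $u_0=1$ and quote Theorem \ref{Gorc}/Corollary \ref{GorcCor}, rather than attempting a per-tuple spanning-tree inequality.
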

Note that Theorem \ref{NoncCum} holds true without assuming that $(\xi_n,\xi_m)\thicksim
\mu_{m-n}$ since its proof does not require that the limit $D^2$ exists. When 
$(\xi_n,\xi_m)\thicksim\mu_{m-n}$ then $N^{-\frac12}\bar S_N$ satisfies the CLT
and so the term $N$ on the above right hand sides should not be alarming 
since theorem \ref{NoncCum} implies that 
\[
|\Gam_k(N^{-\frac12}\bar S_N)|\leq (k!)^{1+\gam}(N^{-\frac12}c_0)^{k-2}
\]
for any $k\geq3$, where $\gam$ is either $\gam_1$ or $\gam_2$, depending on the case.  
After establishing Theorem \ref{NoncCum} the moderate deviations theorems and (stretched) exponential concentration 
inequalities stated in Theorems \ref{Thm:ModDevNonc} and \ref{Thm2.4} follow from the so called method of cumulants (see \cite{SaulStat} and \cite{Dor}).

Theorem \ref{MartExp-Cor} will follow from the following result together with the  Hoeffding-Azuma inequality. 

\begin{theorem} \label{MartExp}
Suppose that $F$ is a bounded H\"older function
and that 
\[
\varphi:=\sum_{n=0}^\infty\phi(n)<\infty. 
\]
 Then there exists a constant
$B>0$ which depends only on $\ell$ so that for any $N\geq1$ and $r\geq 0$ there is a 
martingale $M^{(N,r)}_{n},\,n\geq 1$ whose differences are bounded by 
$\del_1':=BK(\varphi+r+1)$ and 
\[
\|S_N-M_{\ell N}^{(N,r)}\|_{\infty}\leq \del_2':=BKN\be_\infty^\ka(r)+\del_1'.
\]
When $\be_\infty(r_0)=0$ for some $r_0\geq0$ then the above results hold true with $r=r_0$
for any bounded Borel  function $F$.
\end{theorem}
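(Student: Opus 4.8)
The plan is to construct the martingale approximation by exploiting the local dependence structure of nonconventional sums introduced in \cite{book}. First I would fix $N$ and $r$, and replace each $\xi_n$ appearing in $S_N$ by its conditional expectation $\xi_n^{(r)}:=\bbE[\xi_n\mid\cF_{n-r,n+r}]$; writing $S_N^{(r)}=\sum_{n=1}^N F(\xi_n^{(r)},\xi_{2n}^{(r)},\dots,\xi_{\ell n}^{(r)})$ and using the H\"older condition (\ref{F Hold}) together with boundedness of $F$ (so $\lambda=0$) and the definition (\ref{AprxCoef}) of $\beta_\infty^\ka(r)$, one gets $\|S_N-S_N^{(r)}\|_\infty\leq BKN\be_\infty^\ka(r)$ for a constant $B=B(\ell)$. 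This accounts for the first summand in $\del_2'$, and reduces matters to building a bounded-difference martingale that is $L^\infty$-close to $S_N^{(r)}$.

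For the martingale itself, the key structural fact is that for each block of indices $\{n,2n,\dots,\ell n\}$ the ``new'' variable is $\xi_{\ell n}$ (or, after smoothing, $\xi_{\ell n}^{(r)}$), which is $\cF_{\ell n-r,\ell n+r}$-measurable. I would order the blocks by $n=1,\dots,N$ and define a filtration $\cG_n$ generated by all $\cF_{k-r,k+r}$ for $k\leq \ell n$ (plus enough of the earlier coordinates); then set $M_{\ell N}^{(N,r)}=\sum_{n=1}^N \bbE[S_N^{(r)}\mid\cG_n]-\bbE[S_N^{(r)}\mid\cG_{n-1}]$, i.e.\ the Doob martingale of $S_N^{(r)}$ relative to $(\cG_n)$. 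The differences are $d_n=\bbE[S_N^{(r)}\mid\cG_n]-\bbE[S_N^{(r)}\mid\cG_{n-1}]$, and the content of the bound $\del_1'=BK(\varphi+r+1)$ is that only boundedly many terms $F(\xi_m^{(r)},\dots)$ in $S_N^{(r)}$ are ``affected'' by the $n$-th block after conditioning: the term indexed by $m$ involves $\xi_{jm}$ for $j=1,\dots,\ell$, and such a term contributes to $d_n$ only when one of the $jm$ lies in the window around $\ell n$, which for fixed $n$ happens for $O(r+1)$ values of $m$; the $\varphi=\sum\phi(n)<\infty$ contribution comes from controlling the residual dependence of the far-away terms through the $\phi$-mixing coefficients, using (\ref{Phi Gen}) to bound $\|\bbE[G\mid\cG_n]-\bbE[G\mid\cG_{n-1}]\|_\infty$ by something like $2\phi(\mathrm{gap})\|G\|_\infty$ for each far block $G$ and summing. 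Finally $\|S_N^{(r)}-M_{\ell N}^{(N,r)}\|_\infty\leq\del_1'$ because $S_N^{(r)}=\bbE[S_N^{(r)}\mid\cG_N]$ up to the first ``partial'' conditional expectation $\bbE[S_N^{(r)}\mid\cG_0]$, whose $L^\infty$ norm is again of order $\del_1'$ by the same mixing estimate; combining with the smoothing step gives $\|S_N-M_{\ell N}^{(N,r)}\|_\infty\leq\del_2'$. For the case $\be_\infty(r_0)=0$ one takes $r=r_0$, so the smoothing step is exact and no continuity of $F$ is used anywhere.

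The main obstacle I anticipate is making the ``boundedly many affected terms plus summable mixing tail'' bookkeeping precise and uniform in $N$: one must carefully define the filtration $\cG_n$ so that (a) $\xi_{\ell n}^{(r)}$ is $\cG_n$-measurable while (b) each $F$-term is genuinely close to $\cG_n$-measurable for $n$ past its last argument, and (c) the per-block difference bound is $O(K(r+\varphi+1))$ with an $\ell$-dependent but $N$-independent constant. The telescoping/approximation identity relating $S_N^{(r)}$, $\bbE[S_N^{(r)}\mid\cG_N]$ and $\bbE[S_N^{(r)}\mid\cG_0]$ must also be handled with the $\phi$-mixing estimate rather than exact measurability, since $\xi_n$ is only approximately $\cF_{n,n}$-measurable and $\cG_n$ only ``sees'' a finite window. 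Once Theorem~\ref{MartExp} is in hand, Theorem~\ref{MartExp-Cor} follows immediately: the Hoeffding--Azuma inequality applied to $M_{\ell N}^{(N,r)}$ (which has $\ell N$ steps, differences bounded by $\del_1'$) gives $\bbE e^{\la M_{\ell N}^{(N,r)}}\leq e^{B'\la^2 N\ell(\del_1')^2}$ for a suitable constant, and then $\bbE e^{\la S_N}\leq e^{\la\|S_N-M_{\ell N}^{(N,r)}\|_\infty}\bbE e^{\la M_{\ell N}^{(N,r)}}\leq e^{\la\del_2'}\,e^{B'\la^2 N\ell(\del_1')^2}$, which is (\ref{ExpEstSN}) after renaming constants and replacing $\del_1',\del_2'$ by the slightly larger $\del_1,\del_2$ stated there.
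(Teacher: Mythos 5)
Your route is genuinely different from the paper's. The paper first decomposes $F=\sum_{i=1}^\ell F_i$ with $\int F_i(y_1,\dots,y_{i-1},z)\,d\mu(z)=0$ (the decomposition from \cite{KV}) and then builds a Gordin-type martingale, $W_{i,n,r}=Y_{i,n,r}+R_{i,n,r}-R_{i,n-1,r}$ with $R_{i,n,r}=\sum_{s\geq n+1}\bbE[Y_{i,s,r}|\cF_{-\infty,n+r}]$, the $L^\infty$-bound on the $R$'s coming from the conditional centering of the $F_i$'s combined with the random-function mixing lemma (Lemma \ref{thm3.11-StPaper}); you instead take the Doob martingale of the smoothed sum $S_N^{(r)}$ along the block filtration. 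That can be made to work (and nicely avoids the $F=\sum_i F_i$ decomposition), but as written your key step, the bound $\|d_n\|_\infty\leq BK(\varphi+r+1)$, has a genuine gap. Your bookkeeping (``a term contributes only when some $jm$ lies in an $O(r)$-window around $\ell n$, plus far-away blocks controlled by $2\phi(\mathrm{gap})\|G\|_\infty$'') misses the mixed terms: for $n\leq m\leq \ell n+O(r)$ the summand $T_m=F(\xi_m^{(r)},\dots,\xi_{\ell m}^{(r)})$ has some arguments that are $\cG_{n-1}$-measurable and others lying far in the future, and there are of order $(\ell-1)n$ such $m$'s, not $O(r+1)$. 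The estimate coming from (\ref{Phi-Rel-StPaper}) does not apply to them, since $T_m$ is not future-measurable: you must freeze the past arguments, writing $T_m=H(U,V)$ with $U$ being $\cG_{n-1}$-measurable, and use a bound on $\sup_u\big|\bbE[H(u,V)|\cG_n]-\bbE H(u,V)\big|$ that is uniform in $u$ --- exactly the paper's Lemma \ref{thm3.11-StPaper}, whose proof is where the H\"older continuity of $F$ enters the martingale step as well (and why Borel $F$ requires $\be_\infty(r_0)=0$). With that lemma both $\bbE[T_m|\cG_n]$ and $\bbE[T_m|\cG_{n-1}]$ are within $2\sup|F|\,\phi(g_m)$ of the same frozen mean, where $g_m$ is the distance from the information boundary to the earliest ``future'' argument of $T_m$; since for each value $t$ of $g_m$ at most $\ell$ indices $m$ occur, these terms total at most $C\ell K\varphi$. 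This freezing-plus-counting argument is what actually produces the $\varphi$ in $\del_1'$, and it is absent from your sketch.

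A second, smaller gap: the Doob telescoping leaves the remainder $\bbE[S_N^{(r)}|\cG_0]$, and mixing alone does not make it $O(\del_1')$. You need the centering (\ref{F bar}), the assumption $\xi_n\thicksim\mu$, and a decorrelation estimate in the spirit of Lemma \ref{lem3.1-StPaper} to get $|\bbE S_N^{(r)}|\leq CK\ell(\varphi+r+1)+CK\ell N\be_\infty^\ka(r)$; the extra $N\be_\infty^\ka(r)$ is harmless because it is absorbed into $\del_2'$, but it has to be accounted for. Your deduction of Theorem \ref{MartExp-Cor} from Theorem \ref{MartExp} via Hoeffding--Azuma coincides with the paper's.
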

 

\subsection{Product functions}\label{PrFun0}
In the special case when $F$ has the form 
\begin{equation}\label{Product form}
F(x_1,...,x_\ell)=\prod_{i=1}^\ell f_i(x_i)
\end{equation}
the results stated in Theorems \ref{Thm:ModDevNonc} (i) and Theorem \ref{Thm2.4} (i) hold true under weaker assumptions,
as described in what follows. 

Recall first that the $\al$-mixing coefficients are given by 
\begin{equation}\label{alCoeff}
\al(n)=\sup\{\al(\cF_{-\infty,k},\cF_{k+n,\infty}): k\in\bbZ\}
\end{equation}
where  for any two sub-$\sigma-$algebras $\cG,\cH\subset\cF$,
\begin{equation}\label{al def}
\al(\cG,\cH)=\sup\big\{\big|P(A\cap B)-P(A)P(B)\big|: A\in\cG,\, B\in\cH\big\}.
\end{equation}
Then (see \cite{Br}) $\al(n)\leq\frac12\phi(n)$ for any $n\geq0$, and so, assumptions involving $\al(n)$ are weaker than ones involving $\phi(n)$. We also recall that (see \cite{Douk0}) for any 
bounded functions $g_1,...,g_L$, numbers $m_1<n_1<m_2<n_2<...<m_L<n_L$ and $\cF_{m_i,n_i}$-measurable random vectors $U_i$, $i=1,2,...,L$,
 \begin{equation}\label{Aapha block result0}
\left|\bbE \prod_{i=1}^Lg_i(U_i)-\prod_{i=1}^L\bbE g(U_i)\right|\leq 8\big(\prod_{i=1}^L\sup|g_j|\big)
\sum_{t=2}^{L}\al(m_t-n_{t-1}).
\end{equation}
Relying on (\ref{Aapha block result0}) we show in Section \ref{ProdCase} that all the results stated in Theorems \ref{Thm:ModDevNonc} (i)  and Theorem \ref{Thm2.4} (i) hold true when $f_i$'s are bounded. The situation of unbounded $f_i$'s satisfying certain moment conditions is discussed there, as well.

Next, let $T:\Om\to\Om$ be a measurable and $P$-preserving map. We assume here that there exists 
a space $\cH$ of real valued bounded functions 
on $\Om$, a norm $\|\cdot\|_\cH$ on $\cH$, a constant $d$ and a sequence $c(m),\,m\geq 1$, 
which converges to $0$ as $m\to\infty$,
 so that for any $f,g\in\cH$ and $n\geq 1$,
\begin{equation}\label{DecCorCond}
\text{Cor}_P(g,f\circ T^n)\leq d\|g\|_\cH\sup|f|c(n).
\end{equation}
Usually $\Om$ will be a topological space and $\cH$ will be a space of H\"older continuous 
functions equipped with an appropriate norm. 
We also assume that the $f_i$'s are members of $\cH$.
Obtaining the MDP and exponential concentration inequalities under condition (\ref{DecCorCond})
is important when either there are no underlying Markov partitions
or there is no effective estimate on the diameter of such partitions (so it is impossible to
approximate effectively H\"older continuous functions by functions which are constant on elements of 
such partitions). For instance, (\ref{DecCorCond}) holds true with $c(n)=e^{-an},\,a>0$
in the (nonrandom) setup of \cite{MSU},
where $T$ is a locally distance expanding map and $\cH$ is a space of (locally) H\"older continuous functions, 
while there are no underlying Markov partitions.
Let  $n_1<n_L<...<n_L$ and $g_1,...,g_L\in\cH$. By writing 
\[
\prod_{i=1}^{L}g_i\circ T^{n_i}=\big(g_1\cdot G\circ T^{n_2-n_1}\big)\circ T^{n_1}
\]
where $G=\prod_{i=2}^L g_i\circ T^{n_i-n_2}$ we obtain that
\begin{equation}\label{DecCor block result0}
\left|\bbE_P\prod_{i=1}^L g_i\circ T^{n_i}-\prod_{i=1}^L\bbE_Pg_i\circ T^{n_i}
\right|\leq dM^L\sum_{t=2}^L c(n_t-n_{t-1})
\end{equation}
where $M=\max\{\sup|g_i|,\|g_i\|_\cH:\,i=1,2,...,L\}$.
Suppose next that 
\[
\sum_{n=1}^\infty nc(n)<\infty.
\]  
Using (\ref{DecCor block result0}) in place of (\ref{Aapha block result0}), we will prove in Section \ref{ProdCase} that all the results stated in Theorem \ref{Thm:ModDevNonc} (i), Theorem \ref{Thm2.4} (ii)
and  Theorem \ref{D-thm} hold true with $\be_\ka(n)\equiv0$ and $c(n)$ in place of $\phi(n)$.

\section{Nonconventional moderate deviations and exponential inequalities via the method of cumulants}
\label{sec3}\setcounter{equation}{0}
\subsection{General estimates of cumulants} 



Let $V$ be a finite set and $\rho:V\times V\to[0,\infty)$ be so that $\rho(v,v)=0$
and $\rho(u,v)=\rho(v,u)$ for any $u,v\in V$. For any $A,B\subset V$ set 
\[
\rho(A,B)=\min\{\rho(a,b): a\in A, b\in B\}.
\] 
Let $X_v,\, v\in V$ be a collection of centered random variables 
with finite moments of all orders, and for each $v\in V$ and $t\in(0,\infty]$ let 
$\varrho_{v,t}\in(0,\infty]$ be so that $\|X_v\|_t\leq\varrho_{v,t}$.  
Set $W=\sum_{v\in V} X_v$.  
The following result is (essentially) proved in \cite{Gorc} (see Theorem 1 there).

\begin{theorem}\label{Gorc}
Let $0<\del\leq\infty$.
Suppose that for any $k\geq 1$, $b>0$
and a finite collection $A_j,\,j\in\cJ$ of (nonempty) subsets of $V$ so that 
$\min_{i\not=j}\rho(A_i,A_j)\geq b$ and $r:=\sum_{j\in\cJ}|A_j|\leq k$ we have
\begin{equation}\label{MixGorc}
\left|\bbE\prod_{j\in\cJ}\prod_{i\in A_j}X_i-\prod_{j\in\cJ} \bbE\prod_{j\in A_j}X_i\right|\leq
(r-1)\Big(\prod_{j\in\cJ}\prod_{i\in A_j}\varrho_{i,(1+\del)k}\Big)\gam_\del(b,k)
\end{equation}
where $\gam_\del(b,r)$ is some nonnegative number which depends only
on $\del,b$ and $r$, and $|\Del|$ stands for the cardinality of a finite set $\Del$.
Then for any $k\geq2$ and $s>0$, 
\[
|\Gam_k(W)|\leq k^k\Big(2^kC(k)(L_s(k))^{k-1}+R_s(\del,k)\Big)
\]
where for any $0<t\leq\infty$,
\begin{eqnarray*}
L_s(t)=\sup\big\{\sum_{u\in V: \rho(u,v)\leq s}\varrho_{u,t}:\,v\in V\big\},\,\,
C(t)=\sum_{v\in V}\varrho_{v,t},\\
R_s(\del,k)=\sum_{m\geq s+1}\big(L_m((1+\del)k)\big)^{k-1}C((1+\del)k)\la(\tilde\gam_\del(m,k),k),\\
\tilde\gam_\del(m,k)=\max\{\gam_\del(m,r)/r: 1\leq r\leq k\}\\
\text{and }\,\,\la(\ve,k)=k!\sum_{r=1}^{[\frac k2]}\frac{\ve^r(3r+1)^{k-2r}}{r(k-2r)!}.
\end{eqnarray*} 
\end{theorem}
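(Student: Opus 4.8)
The plan is to bound the $k$-th cumulant $\Gam_k(W)$ by expanding it in terms of joint cumulants of the summands $X_v$. Recall the standard multilinear expansion
\[
\Gam_k(W)=\sum_{v_1,\dots,v_k\in V}\Gam(X_{v_1},\dots,X_{v_k}),
\]
where $\Gam(\cdot,\dots,\cdot)$ denotes the joint (mixed) cumulant. The first and main step is a purely combinatorial/analytic estimate for a single joint cumulant $\Gam(X_{v_1},\dots,X_{v_k})$ in terms of the geometry of the index multiset $\{v_1,\dots,v_k\}$ with respect to the pseudometric $\rho$. The key device here is the representation of a mixed cumulant via the M\"obius function on the partition lattice,
\[
\Gam(X_{v_1},\dots,X_{v_k})=\sum_{\pi\in\mathcal P(k)}(-1)^{|\pi|-1}(|\pi|-1)!\prod_{B\in\pi}\bbE\prod_{i\in B}X_{v_i},
\]
and one then groups the partitions $\pi$ according to the graph structure induced by which pairs of indices lie within distance $\le b$ of each other. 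When the index set splits into $\rho$-separated clusters, the hypothesis (\ref{MixGorc}) allows one to decouple the expectation over those clusters, and the M\"obius-function identity $\sum_{\pi}(-1)^{|\pi|-1}(|\pi|-1)!=0$ forces the ``fully connected'' part of the sum to vanish unless the index set is $b$-connected for every scale $b$ up to a certain value — this is exactly the mechanism producing the factor $(r-1)$ and the functions $\gam_\del(b,r)$, $\la(\ve,k)$ in the statement.

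Second, I would organize the sum over $k$-tuples $(v_1,\dots,v_k)$ by first fixing the underlying set $A=\{v_1,\dots,v_k\}$ and a connectivity scale, then summing. Fixing one ``root'' vertex $v\in V$ and summing over the remaining vertices while controlling how far they spread: each additional vertex within distance $s$ of the already-chosen ones contributes at most $L_s((1+\del)k)$ (by definition of $L_s$), while the total over all possible roots contributes the factor $C((1+\del)k)$ (or $C(k)$ for the ``short-range'' part). This is the standard cluster-expansion bookkeeping: one gets $C(k)(L_s(k))^{k-1}$ from configurations that stay within the scale-$s$ neighborhood graph, and the ``tail'' contributions from configurations requiring a jump of size $m>s$ are collected into $R_s(\del,k)$, each weighted by the decoupling error $\gam_\del(m,\cdot)$ via $\tilde\gam_\del(m,k)$ and the combinatorial factor $\la(\cdot,k)$ coming from the partition-lattice sum. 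Combining the per-cumulant bound from the first step with this counting yields the claimed $k^k\big(2^kC(k)(L_s(k))^{k-1}+R_s(\del,k)\big)$; the $k^k$ and $2^k$ absorb the crude counts of partitions of $[k]$ and of labeled tree/connectivity structures on $k$ vertices.

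The main obstacle, and the part requiring genuine care rather than bookkeeping, is the first step: extracting the factor $(r-1)$ and correctly matching the partition-lattice cancellation with the clustering hypothesis (\ref{MixGorc}). One must show that after summing over all partitions $\pi$, only terms where \emph{some} block genuinely bridges a gap of size $b$ survive, and that the accumulated combinatorial coefficients are controlled by $\la(\tilde\gam_\del(m,k),k)$ — i.e. by a sum over $r$ of $\ve^r(3r+1)^{k-2r}/(r(k-2r)!)$ times $k!$. Getting the exponents $3r+1$ and $k-2r$ right is the delicate point; it comes from bounding the number of ways a connected ``correction'' subgraph on $r$ selected edges can be completed to a partition of $[k]$. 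Since the theorem is quoted as "(essentially) proved in \cite{Gorc}", in the paper itself I would simply invoke that reference for this step and only reproduce the counting of the second step if a self-contained argument is desired; but a full proof would spend most of its effort nailing down this combinatorial cancellation uniformly in $k$.
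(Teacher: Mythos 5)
Your plan is essentially the paper's own treatment: the paper does not prove Theorem \ref{Gorc} internally but quotes it from \cite{Gorc}, remarking only that the proof there derives exactly the decoupling inequality (\ref{MixGorc}) (with $\varrho_{v,t}=\|X_v\|_t$) from its local mixing condition, so the argument carries over verbatim; your sketch of the underlying machinery (expansion of $\Gam_k(W)$ into mixed cumulants, the partition-lattice/M\"obius representation, grouping indices by $\rho$-clusters at each scale, and cluster-expansion bookkeeping yielding the $C(k)(L_s(k))^{k-1}$ main term and the $R_s(\del,k)$ tail) is indeed the structure of the argument in \cite{Gorc}, and like the paper you defer the delicate combinatorial cancellation producing $\la(\ve,k)$ to that reference. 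One small correction: the factor $(r-1)$ and the quantities $\gam_\del(b,r)$ are not conclusions that the partition-lattice cancellation must ``produce'' --- they are part of the assumed hypothesis (\ref{MixGorc}); what the proof must do is convert that hypothesis into the weights $\la(\tilde\gam_\del(m,k),k)$ appearing in $R_s(\del,k)$. With that caveat, the proposal is consistent with, and no less complete than, what the paper itself provides.
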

The difference in the formulations of Theorem 1 in \cite{Gorc} and Theorem \ref{Gorc} is
that the result from \cite{Gorc} relies on a certain local mixing condition instead of (\ref{MixGorc}). 
But in  proof from there the author obtains (\ref{MixGorc}) with $\varrho_{v,t}=\|X_v\|_t$ and appropriate
$\gam_\del(b,k)$ relying on that 
mixing condition, and so Theorem \ref{Gorc} is proved exactly as in \cite{Gorc}. We reformulated this theorem
in order to include the case when $\beta_{q}(r)\not=0$ for any $r$ and the second situation considered in
Section \ref{PrFun0}.

Note that by Stirling's approximation there exists a constant $C>0$ so that 
$k^k\leq Ce^kk!$ for any $k\geq1$. Remark also that when condition (\ref{MixGorc}) holds true 
only in the case when $|\cJ|=2$, then using induction this implies that (\ref{MixGorc}) holds true with 
$k\gam_\del(b,k)$ instead of $\gam_\del(b,k)$, for collections of more than two sets. 
Compare this with \cite{KalNe}, \cite{Douk} and \cite{Ded} in the case when $V=\{1,...,n\}$ and $\rho(x,y)=|x-y|$.

Next, the following result is a consequence of Theorem \ref{Gorc}.

\begin{corollary}\label{GorcCor}
Suppose, in addition to the assumptions of Theorem \ref{Gorc}, 
that there exist $c_0\geq1$ and $u_0\geq 0$ so that 
\begin{equation}\label{linear rho}
|\{u\in V: \rho(u,v)\leq s\}|\leq c_0s^{u_0}
\end{equation}
for any $v\in V$ and $s\geq 1$. Assume also that $\tilde\gam_\del(m,k)\leq de^{-am^\eta}$ for some
$a,\eta>0$, $d\geq 1$ and all $k,m\geq1$. Then there exists a constant $c$ which depends only
on $c_0,a,u_0$ and $\eta$ so that for any $k\geq 2$, 
\begin{equation}\label{GorEst1}
|\Gam_k(W)|\leq d^k|V|c^k(k!)^{1+\frac{u_0}\eta}\big(M_k^k+M_{(1+\del)k}^k\big)
\end{equation}
where for any $q>0$,
\[
M_{q}=\max\{\varrho_{v,q}:\,v\in V\}\,\,\text{ and }\,\,M_q^k=(M_q)^k.
\]
When the $X_v$'s are bounded and (\ref{MixGorc}) holds true with $\del=\infty$
we can always take $\varrho_{v,t}=\varrho_{v,\infty},\, t>0$ and 
then for any $k\geq2$,
\begin{equation}\label{GorEst1-cor}
|\Gam_k(W)|\leq 2d^k|V|M_\infty^kc^k(k!)^{1+\frac{u_0}\eta}.
\end{equation}
When $\del<\infty$ and there exist $\te\geq0$ and $M>0$ so that
\begin{equation}\label{MomGorch}
(\varrho_{v,k})^k\leq M^k(k!)^{\te}
\end{equation}
for any $v\in V$ and $k\geq1$, then for any $k\geq2$,
\begin{equation}\label{GorEst2}
|\Gam_k(W)|\leq 3C^{\frac{\te}{1+\del}}d^k|V|c^k(1+\del)^k
M^k(k!)^{1+\frac{u_0}{\eta}+\te}
\end{equation}
where $C$ is some absolute constant. 
\end{corollary}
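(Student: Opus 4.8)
\textbf{Proof proposal for Corollary \ref{GorcCor}.}

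The plan is to take the general cumulant bound of Theorem \ref{Gorc} and simplify each of its pieces under the two additional hypotheses \eqref{linear rho} and $\tilde\gam_\del(m,k)\le de^{-am^\eta}$. First I would bound the "local" quantities. Since $M_t:=\max_{v\in V}\varrho_{v,t}$ is finite, \eqref{linear rho} gives immediately
\[
L_s(t)\le c_0 s^{u_0}M_t\quad(s\ge1),\qquad C(t)\le|V|M_t,
\]
so the main term $2^kC(k)(L_s(k))^{k-1}$ is at most $2^k|V|M_k\,(c_0s^{u_0}M_k)^{k-1}$. At this point one is free to choose $s$; I would take $s=s(k)$ of polynomial size in $k$ (any fixed power works, e.g.\ $s=k$), so that $s^{u_0(k-1)}$ contributes a factor of order $k^{u_0 k}$, which by Stirling is $(k!)^{u_0}$ up to a $c^k$ factor. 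Combined with the prefactor $k^k\le Ce^kk!$ noted before the corollary, the main term is seen to be $\le |V|c^k(k!)^{1+u_0}M_k^k$ for a new constant $c=c(c_0,u_0)$; note this is even a bit stronger than the claimed $(k!)^{1+u_0/\eta}$ exponent, so there is slack here and the real work is controlling the remainder.

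The heart of the argument is the bound on $R_s(\del,k)=\sum_{m\ge s+1}(L_m((1+\del)k))^{k-1}C((1+\del)k)\la(\tilde\gam_\del(m,k),k)$. Using $L_m((1+\del)k)\le c_0m^{u_0}M_{(1+\del)k}$ and $C((1+\del)k)\le|V|M_{(1+\del)k}$, this is at most
\[
|V|\,M_{(1+\del)k}^{k}\,c_0^{k-1}\sum_{m\ge s+1}m^{u_0(k-1)}\la\big(\tilde\gam_\del(m,k),k\big).
\]
Now I plug in $\la(\ve,k)=k!\sum_{r=1}^{[k/2]}\ve^r(3r+1)^{k-2r}/(r(k-2r)!)$ with $\ve=\tilde\gam_\del(m,k)\le de^{-am^\eta}$. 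Crudely bounding $(3r+1)^{k-2r}\le(3k)^k$ and $\sum_r 1/(r(k-2r)!)\le e\cdot$(const), one gets $\la(\ve,k)\le k!\,(3k)^k e^{-am^\eta}\cdot(\text{something}\le d^k)$ once $de^{-am^\eta}\le1$, i.e.\ for $m$ large; the finitely many small $m$ are absorbed into constants. Hence the sum is controlled by $k!\,(3k)^k d^k\sum_{m\ge s+1}m^{u_0(k-1)}e^{-am^\eta}$, and the point is that $\sum_{m\ge1}m^{u_0(k-1)}e^{-am^\eta}$ is, up to a constant to the $k$, of the order of $(k!)^{u_0/\eta}$ — this is the standard estimate $\sum_m m^{\al k}e^{-am^\eta}\le C^k (k!)^{\al/\eta}$ (the summand peaks near $m\sim k^{1/\eta}$). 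This is the step where the exponent $u_0/\eta$ (as opposed to $u_0$) enters, and it is the one place one has to be a little careful. Multiplying through, $R_s(\del,k)\le |V|M_{(1+\del)k}^k\,d^k c^k(k!)^{1+u_0/\eta}$ with $c$ depending on $c_0,a,u_0,\eta$, matching the first displayed bound's shape. Adding the two contributions and folding $M_k^k+M_{(1+\del)k}^k$ as in the statement gives \eqref{GorEst1}.

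Finally the two special cases are quick corollaries of \eqref{GorEst1}. For \eqref{GorEst1-cor}, when the $X_v$ are bounded and $\del=\infty$ we replace $\varrho_{v,t}$ by the single value $\varrho_{v,\infty}$ for all $t$, so $M_k=M_{(1+\del)k}=M_\infty$ and $M_k^k+M_{(1+\del)k}^k=2M_\infty^k$; the factor $2$ is the only change. For \eqref{GorEst2}, assumption \eqref{MomGorch} gives $(\varrho_{v,k})^k\le M^k(k!)^\te$, hence $M_k^k\le M^k(k!)^\te$ and, since $(1+\del)k\ge k$, also $M_{(1+\del)k}^{k}=\big((\varrho^{\max}_{(1+\del)k})^{(1+\del)k}\big)^{1/(1+\del)}\le\big(M^{(1+\del)k}((1+\del)k)!^\te\big)^{1/(1+\del)}\le M^k((1+\del)k)!^{\te/(1+\del)}$; bounding $((1+\del)k)!\le C_0^{(1+\del)k}(k!)^{1+\del}$ for an absolute $C_0$ converts this to $\le M^k C_0^{\te k}(k!)^\te$, so both $M$-factors are $\le M^k C^{\te/(1+\del)}(k!)^\te$ up to $C^k$, which (after absorbing the extra $(1+\del)^k$ coming from the crude bounds $3k\le 3(1+\del)k$ that now have to be tracked since $\del$ is a parameter) yields \eqref{GorEst2}. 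The main obstacle throughout is purely bookkeeping: keeping the dependence on $\del$ explicit in the second special case and verifying the $\sum_m m^{u_0(k-1)}e^{-am^\eta}\le C^k(k!)^{u_0/\eta}$ estimate with a constant $C$ that does not depend on $k$.
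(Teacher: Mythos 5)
Your reduction of the main term and your treatment of the two special cases are fine in spirit, but the heart of the matter — the remainder $R_s(\del,k)$ — does not close as written, and this is exactly where the paper has to work. Two concrete problems. First, the choice $s=k$: the claimed exponent is $(k!)^{1+u_0/\eta}$, and with $s=k$ the main term $k^k2^kC(k)(L_s(k))^{k-1}$ is of order $(k!)^{1+u_0}$, which is \emph{larger} than the target whenever $\eta>1$ (the corollary is asserted for all $\eta>0$); the paper takes $s=k^{1/\eta}$, which both gives $s^{u_0(k-1)}\leq c^k(k!)^{u_0/\eta}$ and guarantees $m^\eta\geq k$ for every $m$ in the tail sum — a property your argument never uses but which is essential below. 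Second, and more seriously, your crude bound $\la(\ve,k)\lesssim k!(3k)^kd^k e^{-am^\eta}$ is too lossy by factorial factors: $k!(3k)^k\approx c^k(k!)^2$, so your tail sum is of order $(k!)^{2}(k!)^{u_0/\eta}$, and after multiplying by the prefactor $k^k\approx e^kk!$ from Theorem \ref{Gorc} (which multiplies $R_s$ as well as the main term — your phrase ``matching the first displayed bound's shape'' suggests this factor was dropped) the remainder contributes $(k!)^{3+u_0/\eta}$, not $(k!)^{1+u_0/\eta}$. Even the intermediate bound you assert, $R_s\leq|V|M_{(1+\del)k}^kd^kc^k(k!)^{1+u_0/\eta}$, does not follow from your estimate of $\la$, and if granted it would still give $(k!)^{2+u_0/\eta}$ after the $k^k$ prefactor.

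The missing idea is that for $m\geq s+1$ with $s=k^{1/\eta}$ one has $\ve=e^{-am^\eta}\leq e^{-ak}$, and this super-exponential smallness must be played off against the combinatorial factors \emph{inside} the sum over $r$, not after pulling them out. The paper bounds $(3r+1)^{k-2r}\leq 4^kr^{k-2r}$, uses Stirling on $(k-2r)!$, and maximizes $g_m(r)=\ve^r\bigl(\frac{r}{k-2r}\bigr)^{k-2r}$ over $r\in[1,\tfrac k2-1]$: the critical-point equation forces $r_0\leq 3/a$, and checking the endpoints and the region $r\leq 3/a$ (using $\ve\leq e^{-ak}$) shows $g_m(r)\leq\ve R^k(k!)^{-1}$, whence $\la(\tilde\gam_\del(m,k),k)\leq d^kR_0^ke^{-am^\eta}$ with \emph{no} factorial at all. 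Only then does the tail estimate $\sum_{m\geq s+1}m^{u_0(k-1)}e^{-am^\eta}\leq\frac1sQ_1^k(k!)^{u_0/\eta}$ (which you state correctly) combine with the $k^k$ prefactor to give the claimed $(k!)^{1+u_0/\eta}$. Without this cancellation of the $k!$ coming from the definition of $\la$, no choice of $s$ rescues the bookkeeping; so the gap is a genuine missing step, not a matter of constants.
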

The proof of this corollary is elementary but for readers' convenience we 
will give all the details.

\begin{proof}
Let $k\geq2$ and $m\geq s\geq k^{\frac1\eta}$. Set $\ve=\ve_m= e^{-am^\eta}$. Then $\tilde\gam_\del(m,k)\leq d\ve$
and so
\begin{equation*}
\la(\tilde\gam_\del(m,k),k)\leq d^k k!4^k\sum_{r=1}^{[\frac k2]}\frac{\ve^rr^{k-2r-1}}{(k-2r)!}\leq
d^kk!4^k\sum_{r=1}^{[\frac k2]-1}\frac{\ve^rr^{k-2r}}{(k-2r)!}+d^kk!4^k\ve^{[\frac k2]}.
\end{equation*}
Observe that $k!4^k\ve^{[\frac k2]}\leq H\ve$ for some constant $H$ which depends only 
on $a$ and $\eta$, where we used that $m^\eta\geq k$. Moreover,
by Stirling's approximation there exists an absolute constant $C>0$ so that for any $1\leq r\leq[\frac k2]-1$, 
\[
\frac{1}{(k-2r)!}\leq C\frac{e^{k-2r}}{(k-2r)^{k-2r}}.
\]
Therefore,
\begin{equation}\label{basic lam estim0}
\la(\tilde\gam_\del(m,k),k)\leq Ck!(4de)^k\sum_{r=1}^{[\frac k2]-1}\ve^r\big(\frac{r}{k-2r}\big)^{k-2r}+
d^kH\ve.
\end{equation} 
Consider next the function $g_m=g_{m,k}:[1,\frac k2-1]\to\bbR$ given by 
\[
g_m(r)=\ve^r\big(\frac{r}{k-2r}\big)^{k-2r}=e^{r\ln\ve-(k-2r)\ln(\frac kr-2)}.
\]
Then, 
\[
g_m'(r)=\big(\ln\ve+2\ln(\frac kr-2)+\frac kr\big)g_m(r).
\]
If $g_m'(r_0)=0$ for some $r_0\in[1,\frac k2-1]$ then 
\[
ak\leq am^\eta=-\ln\ve=2\ln(\frac k{r_0}-2)+\frac k{r_0}\leq \frac{3k}{r_0}
\]
and so $r_0\leq\frac{3}{a}:=q$. Hence,
\[
\max_{r\in[1,\frac k2-1]}g_m(r)\leq\max\big(g_m(1),g_m(\frac k2-1),\max_{w\in[1,q_k]}g_m(w)\big)
\] 
where $q_k=\min\big(\frac k2-1,q\big)$ and we set $\max\emptyset=-\infty$.
Observe now that 
\[
g_m(1)=\frac{\ve}{(k-2)^{k-2}}\leq\frac{k^2\ve}{k!}\leq 3^k (k!)^{-1}\ve.
\]
Since $m^\eta\geq k$ we also have 
\[
g_m(\frac k2-1)\leq k^2\ve^{\frac k2-1}\leq \ve k^2e^{-ak(\frac k2-2)}
\leq c_1\ve(k!)^{-1}
\]
where $c_1$ is a constant which depends only on $a$.
When $k\leq 2(q+1)$ we can trivially write
\[
\max_{w\in[1,q_k]}g_m(w)\leq\ve(\psi_0)^k(k!)^{-1}
\] 
for some constant $\psi_0$ which depends only on $a$. On the other hand, when $k>2(q+1)$ then
using that the function $x\to x^{-x}$ 
is strictly decreasing on $[1,\infty)$ and then Stirling's approximation we derive that
\[
\max_{w\in[1,q_k]}g_m(w)=\max_{w\in[1,q]}g_m(w)\leq \ve (q+1)^k(k-[2q]-1)^{-(k-[2q]-1)}\leq\ve\psi^k(k!)^{-1}
\]
where $\psi$ is a constant which depends only on $a$, and we also used the inequality 
$k!\leq (k-l)!k^l\leq(k-l)!3^{kl},\,1\leq l\leq k$. 
We conclude from the above estimates that there exists a constant $R=R(a,\eta)$ which depends only on $a$ and $\eta$ so that
for any $1\leq r\leq \frac k2-1$,
\[
g_m(r)=\ve^r\big(\frac{r}{k-2r}\big)^{k-2r}\leq \ve R^k(k!)^{-1}
\]
which together with (\ref{basic lam estim0}) yields
\begin{equation}\label{basic lam estim}
\la(\tilde\gam_\del(m,k),k)\leq d^kR_0^k\ve=d^kR_0^ke^{-am^\eta}
\end{equation}
where $R_0=R_0(a,\eta)\geq 1$ is another constant.

Next, using (\ref{linear rho}), (\ref{basic lam estim}) and the definitions of 
$C(t)$ and $L_s(t)$ we obtain that
\[
R_s(\del,k)\leq d^k(1+H)R_0^k(M_{(1+\del)k})^k|V|\sum_{m\geq s+1}m^{u_0(k-1)}e^{-am^\eta}
\]
where $L_s(t),C(t),R_s(\del,k)$ are defined in 
Theorem \ref{Gorc}. Set $j_0=j_0(k,\eta)=[\frac{(k-1)u_0+2}{\eta}]+1$. 
Then 
\[
m^{u_0(k-1)}e^{-am^\eta}\leq m^{u_0(k-1)}j_0!(am^\eta)^{-j_0}\leq j_0!a^{-j_0}m^{-2} .
\] 
By Stirling's approximation there exists a constant $Q$ which depends only on $\eta$ and $u_0$ so that 
$j_0!\leq Q^k(k!)^{\frac{u_0}\eta}$ and therefore,
\[
\sum_{m\geq s+1}m^{u_0(k-1)}e^{-am^\eta}\leq j_0!\sum_{m\geq s+1}\frac1{m^2}\leq 
\frac{1}sj_0!\leq\frac{1}s(Q_1)^k(k!)^{\frac{u_0}\eta}  
\]
where $Q_1$ is a constant which depends only on $\eta,a$ and $u_0$.
Taking $s=k^\frac1\eta$ the estimate (\ref{GorEst1})
follows by Theorem \ref{Gorc}, the definition of $L_s(m)$, Stirling's approximation and (\ref{linear rho}).
By Stirling's approximation $((1+\del)k)!\leq C(k!)^{1+\del}(1+\del)^{(1+\del)k}$
and (\ref{GorEst2}) follows now by \ref{GorEst1} and the inequality $(1+\del)^{\frac1{1+\del}}\leq e$. 
\end{proof}


\subsection{Proof the Theorem \ref{NoncCum}}

Fix some $N\geq1$ and set $V=V_N=\{1,2,...,N\}$. For any $n,m\in V$ set 
\[
\rho(n,m)=\rho_{\ell}(n,m)=\min_{1\leq i,j\leq \ell}|in-jm|.
\] 
Then for any $\Del_1,\Del_2\subset V$,
\begin{equation}\label{dis-rel}
\rho(\Del_1,\Del_2)=\inf\{|x-y|:\, x\in\cT_1,\,y\in\cT_2\}:=\text{dist}(\cT_1,\cT_2)
\end{equation}
where $\cT_i=\{jt:\,t\in\Del_i,\,1\leq j\leq\ell\},\,i=1,2$.
Moreover, for any $s\geq 1$ and $1\leq n\leq N$, 
\begin{equation*}
A_s(n,N):=\{m\in V:\,\rho(m,n)\leq s\}=\bigcup_{1\leq i,j\leq\ell}\left[\frac{in-s}j,\frac{in+s}j\right]
\end{equation*}
and so 
\begin{equation}\label{u0 cond ver}
|A_s(n,N)|\leq 3\ell^2 s.
\end{equation}
Therefore (\ref{linear rho}) holds true in our situation with $u_0=1$. 
For each $n\in V$ put $\Theta_n=(\xi_n,\xi_{2n},...,\xi_{\ell n})$ 
and 
\[
X_n=F(\Theta_n)-\bbE F(\Theta_n).
\]
Then $\bar S_N=\sum_{n\in V}X_n$.
We will verify that the remaining assumptions of Corollary \ref{GorcCor} hold true 
with the above $X_n$'s.
First, for each $r\geq0$ and $n\geq1$, set 
$\xi_{n,r}=\bbE[\xi_n|\cF_{n-r,n+r}]$, 
$\Theta_{n,r}=(\xi_{n,r},\xi_{2n,r},...,\xi_{\ell n,r})$ and 
\[
X_{n,r}=F(\Theta_{n,r})-\bbE F(\Theta_{n,r}).
\]
Set $\rho_\infty=2K(1+\ell)$ and  $\varrho_{t}=2K(1+\ell\tau_{\lambda t}^\lambda),\,0<t<\infty$. 
When $\lambda=0$ then by (\ref{F Bound}) for any $n\geq1$ and $r\geq0$,
\begin{equation}\label{var sigs app infty}
\max(\|X_n\|_\infty,\|X_{n,r}\|_\infty)\leq 2K(1+\ell)=\varrho_{\infty}
\end{equation}
while when $\lambda>0$ we derive similarly that for any $0<t<\infty$, $n\geq 1$ and $r\geq0$,
\begin{equation}\label{var sigs app}
\max(\|X_n\|_t,\|X_{n,r}\|_t)\leq 2K(1+\ell\tau_{\lambda t}^\lambda)=\varrho_{t}
\end{equation}
where we also used the contraction of conditional expectations.  
Note that $\varrho_{t_1}\leq \varrho_{t_2}$ whenever $0<t_1\leq t_2<\infty$.
In our future applications of Corollary \ref{GorcCor} we will always take $\varrho_{v,\infty}=\varrho_{\infty}$ and  $\varrho_{v,t}=\varrho_{t}$ for
$0<t<\infty$.

Next, 
when (\ref{Moment growth assumptions}) holds true and $\lambda>0$
then by Stirling's approximation there exists an 
absolute constant $C>1$ so that for any $k\geq1$,
\begin{equation}\label{Moms app!}
\tau_{\lambda k}^{\lambda k}=\bbE|\xi_1|^{k\lambda}\leq M^{k\lambda}((k\lambda)!)^{\zeta}\leq 
C^{\zeta(\lambda+1)}Q^k(k!)^{\lambda\zeta}\leq(C^{\zeta(\lambda+1)}Q)^k(k!)^{\lambda\zeta}
\end{equation}
where $Q=\lambda^{\zeta\lambda}M^\lambda\geq1$. Therefore, the collection of numbers
$\varrho_{v,k}=\varrho_{k}$ 
satisfies (\ref{MomGorch}) with $4K\ell C^{\zeta(\lambda+1)}Q$ in place of $M$ and with $\te=\lambda\zeta$. 

Now we will verify condition (\ref{MixGorc}). We will need first the following
general result. Let $U_i,\,i=1,2,...,L$ be  $d_i$-dimensional random vectors defined on the
probability space $(\Om,\cF,P)$ from Section \ref{sec1}, and
$\{\cC_j: 1\leq j\leq s\}$ be a partition of $\{1,2,...,L\}$. 
Consider the random vectors $U(\cC_j)=\{U_i: i\in\cC_j\}$, $j=1,...,s$, 
and let 
\[
U^{(j)}(\cC_i)=\{U_i^{(j)}: i\in\cC_j\},\,\, j=1,...,s
\]
be independent copies of the $U(\cC_j)$'s.
For each  $1\leq i\leq L$ let $a_i\in\{1,...,s\}$ be the unique index
such that $i\in\cC_{a_i}$, and 
for any  bounded Borel function $H:\bbR^{d_1+d_2+...+d_L}\to\bbR$ set
\begin{equation}\label{ExpDiffDef}
\cD(H)=\big|\bbE H(U_1,U_2,...,U_L)-\bbE H(U_1^{(a_1)},U_2^{(a_2)},...,U_L^{(a_L)})\big|.
\end{equation}
The following result is proved in Corollary 1.3.11 in \cite{book}
(see also Corollary 3.3 in \cite{Ha}),

\begin{lemma}\label{lem3.1-StPaper}
Suppose that each $U_i$ is $\cF_{m_i,n_i}$-measurable, where $n_{i-1}<m_i\leq n_i<m_{i+1}$, 
$i=1,...,L$, $n_0=-\infty$ and $m_{L+1}=\infty$.  
Then, for any bounded
Borel function $H:\bbR^{d_1+d_2+...+d_L}\to\bbR$,
\begin{equation}\label{Lem 3.1 res}
\cD(H)\leq 4\sup|H|\sum_{i=2}^L\phi(m_i-n_{i-1})
\end{equation}
where $\sup|H|$ is the supremum of $|H|$. In particular, when $s=2$ then
\begin{equation}\label{alpha cor}
\al\big(\sig\{U(\cC_1)\},\sig\{U(\cC_2)\}\big)\leq4\sum_{i=2}^L\phi(m_i-n_{i-1})
\end{equation}
where $\sig\{X\}$ stands for the $\sig$-algebra generated by a random variable 
$X$.
\end{lemma}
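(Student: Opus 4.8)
The plan is to view $\cD(H)$ as the distance $\big|\int H\,dP-\int H\,dQ\big|$ between the joint law $P$ of $(U_1,\dots,U_L)$ and the product law $Q=\bigotimes_{j=1}^{s}P_{\cC_j}$, where $P_{\cC_j}$ is the joint law under $P$ of $U(\cC_j)$: indeed $(U_1^{(a_1)},\dots,U_L^{(a_L)})$ has exactly the law $Q$, since the blocks are made independent while each keeps its original joint distribution. As the $U_i$ take values in Euclidean spaces, both $P$ and $Q$ disintegrate along the index order $1,2,\dots,L$. For $P$ the $i$-th kernel is $\cL_i:=\cL\big(U_i\mid\sig(U_1,\dots,U_{i-1})\big)$; for $Q$, because distinct blocks are $Q$-independent and block $a_i$ carries its $P$-law, the $i$-th kernel depends only on the same-block coordinates and equals $\cL_i':=\cL\big(U_i\mid\sig(\{U_l:\,l<i,\ a_l=a_i\})\big)$. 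Replacing these kernels one at a time, from $i=L$ down to $i=2$ (for $i=1$ both kernels are $\cL(U_1)$), and using $\big|\int G\,d\mu-\int G\,d\nu\big|\le 2\sup|G|\cdot\sup_{B}|\mu(B)-\nu(B)|$ at each step (the function obtained from $H$ by integrating out the later coordinates is again bounded by $\sup|H|$), one gets
\[
\cD(H)\ \le\ 2\sup|H|\sum_{i=2}^{L}\bbE_P\Big[\sup_{B}\big|\cL_i(B)-\cL_i'(B)\big|\Big],
\]
the supremum over Borel sets $B$ being a countable supremum since $\sig(U_i)$ is countably generated.

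The key step is to show that, $P$-a.s., the total variation between $\cL_i$ and $\cL_i'$ is at most $2\phi(m_i-n_{i-1})$. What makes this work is that \emph{both} conditioning $\sig$-algebras lie inside $\cF_{-\infty,n_{i-1}}$: they are generated by the vectors $U_l$ with $l<i$, each $\cF_{m_l,n_l}$-measurable with $n_l\le n_{i-1}$; so no two-sided conditioning ever occurs. For a fixed Borel set $B$ the definition (\ref{Phi Gen}) of $\phi(\cG,\cH)$ yields $\big|P(U_i\in B\mid\cG)-P(U_i\in B)\big|\le\phi\big(\cG,\sig(U_i)\big)\le\phi(m_i-n_{i-1})$ $P$-a.s.\ for every $\cG\subseteq\cF_{-\infty,n_{i-1}}$; taking the (countable) supremum over $B$ inside the exceptional null set, both $\cL_i$ and $\cL_i'$ lie within total variation $\phi(m_i-n_{i-1})$ of the marginal $\cL(U_i)$, hence within $2\phi(m_i-n_{i-1})$ of one another. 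Substituting into the displayed bound gives
\[
\cD(H)\ \le\ 4\sup|H|\sum_{i=2}^{L}\phi(m_i-n_{i-1}),
\]
which is the claim; the constant $4$ is a factor $2$ from $|\int G\,d(\mu-\nu)|\le 2\sup|G|\sup_{B}|\mu(B)-\nu(B)|$ and a factor $2$ from routing each conditional law through $\cL(U_i)$.

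The assertion (\ref{alpha cor}) is then the case $s=2$: applying the bound to $H\big((u_i)_i\big)=\mathds{1}_{E}\big((u_i)_{i\in\cC_1}\big)\mathds{1}_{F}\big((u_i)_{i\in\cC_2}\big)$ gives $|P(A\cap B)-P(A)P(B)|\le 4\sum_{i=2}^{L}\phi(m_i-n_{i-1})$ for the events $A\in\sig\{U(\cC_1)\}$, $B\in\sig\{U(\cC_2)\}$ associated with $E,F$, and one then takes the supremum over all such $A,B$.

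I expect the only genuinely delicate point to be the bookkeeping of the first paragraph: one must organise the telescoping so that every kernel being replaced is conditioned on \emph{past} information only ($\cF_{-\infty,n_{i-1}}$), which is precisely what allows the clean one-sided $\phi$-estimate and avoids having to control conditional laws with respect to two-sided $\sig$-algebras. Everything else — existence of regular conditional distributions, the factorisation of $Q$ along the index order, and the countable-generation remarks — is routine.
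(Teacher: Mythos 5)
Your proof is correct. Note that the paper does not actually prove this lemma in the text — it cites Corollary 1.3.11 of \cite{book} (and Corollary 3.3 of \cite{Ha}) — so the comparison is with that standard argument, which decouples one gap at a time by conditioning on $\cF_{-\infty,n_{i-1}}$ and using sup-norm estimates for conditional expectations of random functions, in the spirit of (\ref{Phi-Rel-StPaper}) and Lemma \ref{thm3.11-StPaper}. Your route is the same successive-decoupling idea in different packaging: you disintegrate both the true law and the block-independent law along the index order, swap one kernel at a time, and control each swap by the total variation distance between the conditional law $\cL_i$ and the block-conditional law $\cL_i'$, routed through the marginal of $U_i$. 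The two ingredients you rely on are sound: for a fixed Borel set $B$ and $\cG\subset\cF_{-\infty,n_{i-1}}$ the bound $|P(U_i\in B\mid\cG)-P(U_i\in B)|\leq\phi(\cG,\sig(U_i))\leq\phi(m_i-n_{i-1})$ a.s.\ follows directly from the definition (\ref{Phi Gen}) (if it failed on a $\cG$-set $A$ of positive measure one would contradict the supremum defining $\phi$), and the reduction of the Borel supremum to a countable generating algebra legitimately turns the family of a.s.\ bounds into an a.s.\ bound on the total variation. Your bookkeeping also lands on the same constant, $4\sup|H|$ per gap (a factor $2$ from the signed-measure bound and a factor $2$ from passing through the marginal), and the deduction of (\ref{alpha cor}) by taking $H$ a product of indicators is exactly as intended. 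The only practical difference is that your argument needs regular conditional distributions and the hybrid-measure telescoping, which is harmless here since all variables are Euclidean-valued, whereas the cited proof avoids kernels by working with conditional expectations of frozen-variable functions; both are one-sided in precisely the way you emphasize, conditioning only on past $\sig$-algebras.
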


Next, in order to show that (\ref{MixGorc}) holds true we first notice that
 for any set of pairs $(a_i,b_i),\,i=1,2,...,m$,
\begin{equation}\label{prod-sum relation}
\prod_{i=1}^ma_i-\prod_{i=1}^m b_i=\sum_{i=1}^m\prod_{1\leq j<i}a_j(a_i-b_i)\prod_{i<j\leq m}b_j.
\end{equation}
Let $n_1,...,n_m\in V$ and $q\geq0$.
When $\lambda=0$ using (\ref{prod-sum relation}), (\ref{var sigs app infty}) and (\ref{F Hold}) we obtain that 
for each $1\leq i\leq m$,
\begin{eqnarray}\label{Approx1}
\left|\bbE\prod_{i=1}^mX_{n_i}-\bbE\prod_{i=1}^mX_{n_i,q}\right|\leq\\
m(\varrho_{\infty})^{m-1}\max\{\bbE |X_{n_i}-X_{n_i,q}|,\,1\leq i\leq m\}\leq
m(\varrho_{\infty})^{m}\ell\be_\ka^{\ka}(q).\nonumber
\end{eqnarray}
When $\lambda>0$ then by  the contraction of conditional expectations for any $1\leq i\leq m$,
\begin{eqnarray*}
\|X_{n_i}-X_{n_i,q}\|_m\leq K\big\|1+\sum_{j=1}^\ell(|\xi_{jn_i}|^\lambda+|\xi_{jn_i,q}|^\lambda)\big\|_m
\sum_{j=1}^\ell\big\||\xi_{jn_i}-\xi_{jn_i,q}|^\ka\big\|_\infty\\
\leq K\ell(1+2\ell\tau_{m\lambda}^\lambda)\be_\infty^\ka(q)\leq \varrho_m\ell\be_\infty^\ka(q). 
\end{eqnarray*}
where $\varrho_{m}$ is defined in \ref{var sigs app}.
Therefore by  (\ref{prod-sum relation}), (\ref{var sigs app}), (\ref{F Hold}) and the H\"older inequality,
\begin{equation}\label{Approx2}
\left|\bbE\prod_{i=1}^mX_{n_i}-\bbE\prod_{i=1}^mX_{n_i,q}\right|\leq
m(\varrho_{m})^m\ell\be_\infty^\ka(q).
\end{equation}

Now, let $k,b\geq 1$ and a finite collection $A_j,\, j\in\cJ$ of nonempty subsets of $V$ be so that
$r:=\sum_{j\in\cJ}|A_j|\leq k$ and $\rho(A_j,A_i)\geq b$ whenever $i\not=j$. 
Set $q_b=[\frac b3]$.  
When $\lambda=0$  set $\del=\infty$ and
\[
\gam(b,r)=\gam_\infty(b,r)=128\ell r\big(\phi(q_b)+\be_\ka^\ka(q_b)\big)
\]
while when $\lambda>0$  set $\del=1$ and 
\[
\gam(b,r)=\gam_{1}(b,r)=128\ell r\big(\phi^{\frac12}(q_b)+\be_\infty^\ka(q_b)\big).
\] 
We claim that in both cases (\ref{MixGorc}) holds true with $\varrho_{v,t}=\varrho_{t}$ defined 
in (\ref{var sigs app}) and (\ref{var sigs app infty}) and the above $\del$ and $\gam_\del(b,r)$
(depending on the case). Indeed, when $\lambda=0$ and $\del=\infty$  set
$\gam'_\del(b,r)=32\ell r\phi(q_b)$, while when $\lambda>0$ and $\del=1$ we set 
$\gam'_\del(b,r)=32\ell r(\phi(q_b))^{\frac\del{1+\del}}=32\ell r\sqrt{\phi(q_b)}$. In order to prove this claim 
we first assert that in both cases,
\begin{equation}\label{Suf Show}
\left|\bbE\prod_{j\in\cJ}\prod_{i\in A_j}X_{i,q_b}-\prod_{j\in\cJ} \bbE\prod_{j\in A_j}X_{i,q_b}\right|\leq
(r-1)\Big(\prod_{j\in\cJ}\prod_{i\in A_j}\|X_{i,q_b}\|_{(1+\del)k}\Big)\gam'_\del(b,k).
\end{equation}
It is clear that (\ref{MixGorc}) with these $A_j$'s, $b$ and $k$ follow from
either (\ref{Approx1}) and (\ref{Suf Show}) or (\ref{Approx2}) and (\ref{Suf Show}), depending on the
case, where when $r\geq 2$ we use that $r\leq2(r-1)$.
In order to obtain (\ref{Suf Show}) 
we need first the following. 
Let $\Del_1,\Del_2\subset\bbN$ be so that $\rho(\Del_1,\Del_2)\geq b$
and set $d_1=|\Del_1|+|\Del_2|$ and $\cT_i=\{jx:\,x\in \Del_i, 1\leq j\leq\ell\}$, $i=1,2$. 
Then by (\ref{dis-rel}) we have $\text{dist}(\cT_1,\cT_2)=\rho(\Del_1,\Del_2)\geq b$ and so
we can write
\[
\cT:=\cT_1\cup \cT_2=\bigcup_{i=1}^{L}C_i
\]  
where $L\leq \ell d_1$, $c_i+b\leq c_{i+1}$ for any $c_i\in C_i$ and $c_{i+1}\in C_{i+1}$,\, $i=1,2,...,L-1$
and each one of the $C_i$'s is either a subset of $\cT_1$ or a subset of $\cT_2$. Applying 
(\ref{alpha cor}) with the random vectors $U_i=\{\xi_{j,q_b}:\,j\in C_i\},\,i=1,2,...,L$ and the 
partition of $\{1,2,...,L\}$ into the sets $\cC_1=\{1\leq i\leq L:\,C_i\subset \cT_1\}$ and 
$\cC_2=\{1\leq i\leq L:\,C_i\subset \cT_2\}$  we obtain that 
\begin{equation}\label{alpha cor'}
\al\big(\sig\{X_{i,q_b}: i\in \Del_1\}, \sig\{X_{j,q_b}: j\in \Del_2\}\big)\leq 4\ell d_1\phi(q_b).
\end{equation}
Recall next that (see Corollary A.2 in \cite{HallHyde}) for any two sub-$\sig$-algebras $\cG,\cH\subset\cF$,
\begin{equation}\label{alpha cov HallHyde}
\text{Cov}(\eta_1,\eta_2)\leq 8\|\eta_1\|_u\|\eta_2\|_v\big(\al(\cG,\cH)\big)^{1-\frac1u-\frac1v},
\end{equation}
whenever $h_1$ is  $\cG$-measurable, $h_2$ is $\cH$-measurable
and $1<u,v\leq\infty$ satisfy that $\frac1u+\frac1v<1$ (where we set $\frac1\infty=0$). 
The estimate (\ref{Suf Show}) follows now exactly as in the paragraph preceeding equality (10) in \cite{Gorc}, 
relying on (\ref{alpha cov HallHyde}) and on (\ref{alpha cor'}), in place of the mixing conditions
from \cite{Gorc}. Indeed, writing $\cJ=\{1,2,...,J\}$, setting $\Del_1=A_1$ and 
$\Del_2=\bigcup_{1<i\leq J}A_i$ and applying (\ref{alpha cov HallHyde}) with 
$u=\frac{(1+\del)k}{|\Del_1|}$ and $v=\frac{(1+\del)k}{|\Del_2|}$
we obtain that 
\begin{equation}\label{Temp prod  est}
\Big|\text{Cov}\big(\prod_{i\in \Del_1}X_{i,q_b},\prod_{i\in\Del_2}X_{i,q_b}\big)\Big|\leq
8\big\|\prod_{i\in \Del_1}X_{i,q_b}\big\|_u\,\big\|\prod_{i\in\Del_2}X_{i,q_b}\big\|_v\al^{1-\frac1{1+\del}}
\end{equation}
where $\al=\al\big(\sig\{X_{i,q_b}: i\in \Del_1\}, \sig\{X_{j,q_b}: j\in \Del_2\}\big)$
and we also used that $\al\leq1$ and
\[
\frac1u+\frac 1v=\frac{|\Del_1\cup\Del_2|}{k(1+\del)}=\frac{r}{k(1+\del)}\leq\frac 1{1+\del}.
\]
Using the H\"older inequality to estimate the norms on the right hand side of (\ref{Temp prod  est}) 
and then repeating the above arguments with $\cJ_i=\{i,i+1,...,J\},\,i=2,3,...,J$ in place of $\cJ$ we
obtain (\ref{Suf Show}), taking into account that $J=|\cJ|\leq \sum_{i\in\cJ}|A_i|=r$.
Using either (\ref{var sigs app infty}) or (\ref{var sigs app}) we conclude that 
all the conditions of Corollary \ref{GorcCor} are satisfied under either Assumption \ref{ass1} or 
Assumption \ref{ass2}, and the proof of Theorem \ref{NoncCum} is complete.

\subsection{Proof of Theorems \ref{Thm:ModDevNonc} and \ref{Thm2.4}}.
First, (\ref{FirstExpCon}) from Theorem \ref{Thm:ModDevNonc} follows from Theorem \ref{NoncCum} and
Lemma 2.3 in \cite{SaulStat}. Next, for the purpose of proving Theorem \ref{Thm2.4}, suppose that $D^2>0$. Then
(\ref{ModDev3}) follows by Lemma 6.2 in \cite{Dor} (which is a consequence of Lemma 2.3 in \cite{SaulStat}).
Finally, let $a_N,\,N\geq 1$ be a sequence of real numbers so that 
\[
\lim_{N\to\infty}a_N=\infty\,\,\text{ and }\,\,\lim_{N\to\infty}{a_N}{N^{-\frac{1}{2+4\gam}}}=0
\]
where $\gam=\gam_1=\frac1\eta$ under Assumption \ref{ass1} and 
$\gam=\gam_2=\gam_1+\lambda\zeta$ under Assumption \ref{ass2}.
The variances $v_N$ grow linearly fast in $N$ and therefore
by Theorem \ref{NoncCum} and Theorem 1.1 in \cite{Dor} the sequence $(a_N)^{-1}Z_N,\,N\geq 1$ satisfies the
MDP with the speed $s_N=a_N^2$ and the rate function 
$I(x)=\frac12 x^2$. Since $v_N/N$ converges to 
$D^2>0$ as $N\to\infty$, $|\bbE S_N|$ is bounded in $N$ and $I$ is continuous we derive 
that $(DN^{\frac12}a_N)^{-1}S_N,\,N\geq1$
satisfies the MDP stated in Theorem \ref{Thm2.4}, and the
proof of Theorem \ref{Thm2.4} is complete.

\subsection{Product functions case}\label{ProdCase}
Consider the situation when $F$ has the form 
\[
F(x_1,...,x_\ell)=\prod_{i=1}^\ell f_i(x_i).
\]
We will describe here shortly how to prove Theorems \ref{Thm:ModDevNonc} and \ref{Thm2.4} in the 
situations discussed at the end of Section \ref{sec2}.
\subsubsection{$\al$-mixing case}\label{AlMixCase}
First, in the notations of Lemma  \ref{lem3.1-StPaper}, we obtain that (\ref{Lem 3.1 res}) holds true for functions of the form
$H(u)=\prod_{i=1}^{L}g_i(u_i)$ when all of the $g_i$'s are bounded, where $\phi(m_i-n_{i-1})$
is replaced by $4\al(m_i-n_{i-1})$ for $i=2,3,...,L$. 
Indeed, setting 
\[
u^{(\cC_j)}=\{u_i: i\in\cC_j\}\,\,\text{ and }\,\,G_j(u^{(\cC_j)})=\prod_{i\in\cC_j}g_i(u_i),\,j=1,2,...,s,
\]
we derive from (\ref{Aapha block result0}),
exactly as in the proof of Corollary 1.3.11 in \cite{book} (or Corollary 3.3 in \cite{Ha}),  that
\begin{equation}\label{Aapha block result}
\Big|\bbE H(U_1,...,U_L)-\prod_{j=1}^s\bbE G_j(U(\cC_j))\Big|\leq 16\big(\prod_{j=1}^L\sup|g_j|\big)
\sum_{i=2}^{L}\al(m_i-n_{i-1}).
\end{equation}
 Note that the derivation of (\ref{Aapha block result}) is indeed possible 
since (\ref{Aapha block result0}) holds true for arbitrary
bounded $g_i$'s,  appropriate $U_i$'s and partitions $\cC$'s.
Relying on (\ref{Aapha block result}) 
we can approximate the left-hand side of (\ref{MixGorc})
and therefore the results  stated in Theorems \ref{Thm:ModDevNonc} and \ref{Thm2.4} hold true with $\alpha(n)$ in place of $\phi(n)$.

We remark that (\ref{Aapha block result0}) follows, in fact, by 
a repetitive application of (\ref{alpha cov HallHyde}) with $u=v=\infty$.
Applying (\ref{alpha cov HallHyde}) with finite $u$'s and $v$'s we obtain similar
estimates when the $g_i(U_i)$'s are not bounded but only satisfy certain moment
conditions, where the product $\prod_{j=1}^L\sup|g_j|$ is replaced with an
appropriate product of the form $\prod_{i=1}^L\|g_i(U_i)\|_q$ and $\al(m_t-n_{t-1})$ is replaced
with $(\al(m_t-n_{t-1}))^{\zeta}$ for an appropriate $0<\zeta<1$. We refer the readers to the proof of
(\ref{Suf Show}) for the exact details.
Relying on this ``unbounded version" of (\ref{Aapha block result}),
we can approximate the left-hand side of (\ref{MixGorc})
and obtain results similar to the ones  stated in Theorem \ref{Thm:ModDevNonc} (ii) and Theorem \ref{Thm2.4} (ii), but with with $\alpha(n)$ in place of $\phi(n)$.

\subsubsection{Decay of correlations case}\label{DeCor}
Let $T,\cH$ and $c(m),m\geq 1$ be as described at
the end of Section \ref{sec2}. 
Let  $n_1<n_2<...<n_L$ and $g_1,...,g_L\in\cH$.
In the notations of Lemma  \ref{lem3.1-StPaper}, 
using (\ref{DecCor block result0}) and the $T$-invariance of $P$, we obtain  similarly to the above $\alpha$-mixing case that
\begin{equation}\label{DecCor block result}
\left|\bbE_P\prod_{i=1}^L g_i\circ T^{n_i}-\prod_{j=1}^s\bbE_P\prod_{i\in\cC_j}g_i\circ T^{n_i}
\right|\leq 2dM^L\sum_{t=2}^L c(n_t-n_{t-1})
\end{equation}
where $M=\max\{\sup|g_i|,\|g_i\|_\cH:\,i=1,2,...,L\}$. 
Note that 
when $\sum_{n=1}^\infty nc(n)<\infty$ then 
all the results stated in Theorem \ref{D-thm}
are proved similarly to \cite{KV}, \cite{HK1} and \cite{Ki3} relying on
(\ref{DecCor block result}) instead of the mixing assumptions from there. The inequality
(\ref{DecCor block result}) also yields appropriate
estimates of the left-hand side of (\ref{MixGorc}), and we conclude that
that all the results  stated in Theorem \ref{Thm:ModDevNonc} (i)  and Theorem \ref{Thm2.4} (i)
hold true with  $\beta_\ka(n)\equiv0$ and $c(n)$ in place of $\phi(n)$.

\section{Exponential inequalities via martingale approximation-proof of Theorems \ref{MartExp-Cor} and \ref{MartExp}}\label{sec4}\setcounter{equation}{0}
In this section we adapt the martingale approximation technique from \cite{KV}
and approximate $S_N$ in the $L^\infty$ norm by martingales with bounded differences.
As in \cite{KV} we first write
\begin{equation}\label{F rep}
F(x_1,...,x_\ell)=\sum_{i=1}^\ell F_i(x_1,...,x_i)
\end{equation}
where 
\[
F_\ell(x_1,...,x_\ell)=F(x_1,...,x_\ell)-\int F(x_1,...,x_{\ell-1},z)d\mu(z)
\]
and for $i=1,2,...,\ell-1$,
\begin{eqnarray*}
F_i(x_1,...,x_i)=\int F(x_1,...,x_i,z_{i+1},...,z_\ell)d\mu(z_{i+1})...d\mu(z_\ell)-\\
\int F(x_1,...,x_{i-1},z_{i},...,z_\ell)d\mu(z_{i})...d\mu(z_\ell).
\end{eqnarray*}
Then for each $1\leq i\leq\ell$,
\[
\int F_i(y_1,...,y_{i-1},z)d\mu(z)=0,\,\,\forall\,y_1,...,y_{i-1}
\]
where for $i=1$ we used that $\bar F=0$.

Next,  recall  that  (see \cite{Br}, Ch. 4) for any two sub-$\sigma$-algebras $\cG,\cH\subset\cF$,
\begin{equation}\label{Phi-Rel-StPaper}
2\phi(\cG,\cH)=\sup\{\|\bbE[g|\cG]-\bbE g\|_{\infty}\,:  g\in L^\infty(\Om,\cH,P),\,
\|g\|_{\infty}\leq1\}
\end{equation}
where $\phi(\cG,\cH)$ is defined by (\ref{Phi Gen}). 
The following result is a version of Corollary 3.6 in \cite{KV} 
and Lemma 1.3.10 in \cite{book} (see also Lemma 3.2 in \cite{Ha}).
It does not seem to be new but for readers' convenience and completeness
we will prove it here.

\begin{lemma}\label{thm3.11-StPaper}
Let $\cG,\cH\subset\cF$ be two sub-$\sigma$-algebras of $\cF$ and $d\in\bbN$. Let 
$f(\cdot,\om):\bbR^d\to\bbR$ be a random function so that $f(x,\om)$ is $\cH$-measurable for
any fixed $x\in\bbR^d$ and $P$-a.s. for any $x,y\in\bbR^d$,
\begin{equation}\label{rand hold}
|f(x,\om)|\leq C\,\,\,\text{ and }\,\,\,|f(x,\om)-f(y,\om)|\leq C|x-y|^\ka
\end{equation}
where $C>0$ and $\ka\in(0,1]$ are constants which do not depend on $x,y$ and $\om$.
Set $\tilde f(x,\om)=\bbE[f(x,\cdot)|\cG](\om)$ and
$\bar f(x)=\int f(x,\om)dP(\om)=\int \tilde f(x,\om)dP(\om)$. Then there exists a
measurable set $\Om'\subset\Om$ so that $P(\Om')=1$, $\,\tilde f(x,\om)$ is defined 
for all $\om\in\Om'$ and $x\in\bbR^d$ and
\begin{equation}\label{Meas-StPaper.0}
\sup_{x\in\bbR^d}|\tilde f(x,\om)-\bar f(x)|
\leq2C\phi(\cG,\cH),\,\,P-a.s.
\end{equation}
In particular, for any $\bbR^d$-valued random 
variable $X$,
\begin{equation}\label{Meas-StPaper}
|\tilde f(X,\om)-\bar f(X)|
\leq2C\phi(\cG,\cH),\,\,P-a.s.
\end{equation}
\end{lemma}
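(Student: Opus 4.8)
The plan is to deduce everything from the fixed-$x$ estimate via the characterization (\ref{Phi-Rel-StPaper}) of $\phi(\cG,\cH)$, and then to pass from an almost-sure bound (with an $x$-dependent null set) to one that holds uniformly in $x$ on a single full-measure set, using a Kolmogorov-type continuity argument in which the deterministic H\"older control on $f(\cdot,\om)$ plays the role of the moment bound. \textbf{Step 1 (pointwise bound).} Fix $x\in\bbR^d$. Since $|f(x,\cdot)|\le C$ $P$-a.s., the function $g:=f(x,\cdot)/C$ lies in $L^\infty(\Om,\cH,P)$ with $\|g\|_\infty\le1$, so (\ref{Phi-Rel-StPaper}) gives $\|\bbE[g|\cG]-\bbE g\|_\infty\le2\phi(\cG,\cH)$; multiplying by $C$ and recalling $\bbE f(x,\cdot)=\bar f(x)$, we get $|\tilde f(x,\om)-\bar f(x)|\le2C\phi(\cG,\cH)$ for $P$-a.e.\ $\om$, where the exceptional null set may a priori depend on $x$.

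\textbf{Step 2 (a good version of $\tilde f$).} Let $D=\bbQ^d$ and, for each $x\in D$, fix one version of $\bbE[f(x,\cdot)|\cG]$, still written $\tilde f(x,\cdot)$. Conditional Jensen applied to the a.s.\ bound $|f(x,\cdot)-f(y,\cdot)|\le C|x-y|^\ka$ gives $|\tilde f(x,\om)-\tilde f(y,\om)|\le C|x-y|^\ka$ for $P$-a.e.\ $\om$, for each pair $x,y\in D$. Let $N$ be the union of these countably many null sets together with the null sets from Step 1 for $x\in D$; then $P(N)=0$, and on $\Om':=\Om\setminus N$ the map $x\mapsto\tilde f(x,\om)$ is $\ka$-H\"older on the dense set $D$, hence extends uniquely to a $\ka$-H\"older function on all of $\bbR^d$. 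I then \emph{define} $\tilde f(x,\om)$ for $x\notin D$ by this extension (and put $\tilde f(\cdot,\om)\equiv0$ for $\om\in N$), so that $\tilde f(x,\om)$ is defined for all $(x,\om)\in\bbR^d\times\Om'$.

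\textbf{Step 3 (identification, uniform bound, and composition with $X$).} Given arbitrary $x\in\bbR^d$, pick $x_n\in D$ with $x_n\to x$. From $|f(x_n,\cdot)-f(x,\cdot)|\le C|x_n-x|^\ka$ we get $f(x_n,\cdot)\to f(x,\cdot)$ in $L^1$, hence $\bbE[f(x_n,\cdot)|\cG]\to\bbE[f(x,\cdot)|\cG]$ in $L^1$; since also $\tilde f(x_n,\om)\to\tilde f(x,\om)$ for every $\om\in\Om'$ by the continuous extension, the extension is a version of $\bbE[f(x,\cdot)|\cG]$, and integrating yields $\int\tilde f(x,\om)\,dP(\om)=\bar f(x)$. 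Integrating the H\"older bound on $f$ shows $\bar f$ is $\ka$-H\"older, so letting $n\to\infty$ in $|\tilde f(x_n,\om)-\bar f(x_n)|\le2C\phi(\cG,\cH)$ (valid on $\Om'$) gives (\ref{Meas-StPaper.0}) for every $x\in\bbR^d$ and every $\om\in\Om'$. Finally, for an $\bbR^d$-valued random variable $X$: for each $\om\in\Om'$ the map $\tilde f(\cdot,\om)$ is continuous, and for each fixed $x$ the map $\tilde f(x,\cdot)$ is $\cG$-measurable, so $\tilde f$ is a Carath\'eodory function and hence jointly measurable; thus $\om\mapsto\tilde f(X(\om),\om)$ is measurable, and evaluating (\ref{Meas-StPaper.0}) at $x=X(\om)$ on $\Om'$ gives (\ref{Meas-StPaper}).

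The only non-routine point is the simultaneous bookkeeping in Steps 2--3: obtaining a single null set off which $\tilde f(\cdot,\om)$ is everywhere $\ka$-H\"older --- so that it admits a canonical continuous extension and is jointly measurable --- while retaining the property that this extension equals $\bbE[f(x,\cdot)|\cG]$ for every individual $x$. The deterministic H\"older bound makes this a soft argument rather than an actual Kolmogorov continuity estimate, but it should be carried out carefully; the rest is a direct application of (\ref{Phi-Rel-StPaper}).
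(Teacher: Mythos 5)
Your proof is correct and follows essentially the same route as the paper: both rest on the characterization (\ref{Phi-Rel-StPaper}) applied at countably many points, with the deterministic H\"older bound on $f(\cdot,\om)$ used to pass to a single full-measure set and to all $x$ simultaneously. The only cosmetic difference is that you discretize via a countable dense set and H\"older extension of $\tilde f(\cdot,\om)$, while the paper uses countable partitions of $\bbR^d$ and step-function approximation with $\operatorname{diam}\to0$; your Carath\'eodory remark just makes explicit a measurability point the paper leaves implicit.
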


\begin{proof}
Let $\cA=\{\cA_i: i\in \cI\}$ be a countable partition of $\bbR^d$ and denote its diameter by 
$\text{diam}\cA$. For each $i\in\cI$ let 
$\mathds{1}_{A_i}$ be the indicator function of $A_i$ and
choose some $a_i\in A_i$. Then by (\ref{rand hold}), $P$-a.s. for any $x\in\bbR^d$ we have
\[
|f(x,\om)-\sum_{i\in\cI}\mathds{1}_{A_i}(x)f(a_i,\om)|\leq C(\text{diam}\cA)^\ka.
\]
Taking conditional expectations with respect to $\cG$ and then the limit as $\text{diam}\cA\to 0$ we
obtain the existence of $\Om'$ as in the statement of the 
lemma. Fixing $\cA$ and taking again conditional expectations with respect to $\cG$ we derive that
\begin{eqnarray*}
\sup_{x\in\bbR^d}|\tilde f(x,\om)-\sum_{i\in\cI}\mathds{1}_{A_i}(x)\tilde f(a_i,\om)|\leq C(\text{diam}\cA)^\ka,\,\,P-\text{a.s.}.
\end{eqnarray*}
Similarly, we obtain by taking expectations that
\[
\sup_{x\in\bbR^d}|\bar f(x)-\sum_{i\in\cI}\mathds{1}_{A_i}(x)\bar f(a_i)|\leq C(\text{diam}\cA)^\ka.
\]
Using (\ref{Phi-Rel-StPaper}) and (\ref{rand hold}) we deduce
that for each $i$,
\[
|\tilde f(a_i,\om)-\bar f(a_i)|\leq 2\|f(a_i,\cdot)\|_{\infty}\phi(\cG,\cH)
\leq 2C\phi(\cG,\cH),\,\,P-\text{a.s.}
\]
and therefore, $P$-a.s.,
\[
\sup_{x\in\bbR^d}|\tilde f(x,\om)-\bar f(x)|\leq  2C\phi(\cG,\cH)+2C(\text{diam}\cA)^\ka.
\]
Taking the limit as $\text{diam}\cA\to 0$ we obtain (\ref{Meas-StPaper.0}).
\end{proof}

Next, consider the random functions $F_{i,n,r}$ given by
\[
F_{i,n,r}(x_1,...,x_{i-1},\om)=\bbE[F_i(x_1,...,x_{i-1},\xi_n)|\cF_{n-r,n+r}](\om).
\]
Note that in view of the uniform continuity of $F$  these are indeed random
functions, i.e. all the random variables 
$F_{i,n,r}(x_1,...,x_{i-1},\cdot),\,x_1,...,x_{i-1}\in\bbR^{\wp}$ can be 
defined on a measurable set $\Om'$ so that $P(\Om')=1$.  
Set
\begin{eqnarray}
Y_{i,in}=F(\xi_n,\xi_{2n},...,\xi_{in})\,\text{ and }\,Y_{i,m}=0\text{ if }m\not\in\{in: n\in\bbN\}
\,\,\text{ and}\\
Y_{i,in,r}=F_{i,in,r}(\xi_{n,r},\xi_{2n,r},...,\xi_{(i-1)n,r},\om)\,\text{ and }\,Y_{i,m,r}=0
\text{ if }m\not\in\{in: n\in\bbN\}\nonumber
\end{eqnarray} 
where we recall that $\xi_{m,r}=\bbE[\xi_m|\cF_{m-r,m+r}]$ for any $m\geq1$.

The  following result is proved exactly as in the proof of Proposition 5.8 in \cite{KV}
using Lemma \ref{thm3.11-StPaper} and the inequality $|F|\leq K(1+\ell)$ 
instead of Corollary 3.6 (ii) and the moment assumptions from there.
\begin{corollary}\label{MartConstCor}
Suppose that $\varphi:=\sum_{n=0}^\infty\phi(n)<\infty$. Then there exists a constant $B>0$ which depends only 
on $\ell$ so that for any $l\geq0$ and $r\geq 0$,
\[
\sum_{n=l}^\infty\|\bbE[Y_{i,n,r}|\cF_{-\infty,l+r}]\|_\infty\leq BK(r+1+\varphi).
\]
\end{corollary}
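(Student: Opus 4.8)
\textbf{Proof plan for Corollary \ref{MartConstCor}.}
The plan is to estimate $\|\bbE[Y_{i,n,r}|\cF_{-\infty,l+r}]\|_\infty$ for each $n\geq l$ separately, distinguishing the ``far'' terms (where $n-l$ is large) from the ``near'' terms (where $n-l$ is small), and then sum. Recall that $Y_{i,n,r}$ is supported on indices $n=im$ and has the form $Y_{i,im,r}=F_{i,im,r}(\xi_{m,r},\dots,\xi_{(i-1)m,r},\om)$, where $F_{i,im,r}(\cdot,\om)=\bbE[F_i(\cdot,\xi_{im})|\cF_{im-r,im+r}]$ and $F_i$ satisfies $\int F_i(y_1,\dots,y_{i-1},z)\,d\mu(z)=0$ for all $y$'s, together with the bound $|F_i|\leq 2K(1+\ell)$ (say) and a H\"older-type modulus in its first $i-1$ arguments inherited from (\ref{F Hold}). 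These are exactly the hypotheses needed to invoke Lemma \ref{thm3.11-StPaper}.

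The key step is the following. Fix $n=im\geq l$. The random function $F_{i,im,r}(x_1,\dots,x_{i-1},\om)$ is $\cF_{im-r,im+r}$-measurable for each fixed $(x_1,\dots,x_{i-1})$, bounded by a constant $C=C(K,\ell)$, and H\"older in the $x$-variables with the same constant; moreover its $\mu^{\otimes(i-1)}$-average over the first $i-1$ arguments vanishes. Apply Lemma \ref{thm3.11-StPaper} with $\cH=\cF_{im-r,im+r}$, $\cG=\cF_{-\infty,l+r}$, and the $\bbR^{(i-1)\wp}$-valued random variable $X=(\xi_{m,r},\dots,\xi_{(i-1)m,r})$ (which is $\cF_{-\infty,l+r}$-measurable whenever $(i-1)m+r\leq l+r$, i.e. always when $m\leq l/(i-1)$; for the remaining ``near'' indices with $m$ close to $l$ one uses instead the crude bound $\|\bbE[Y_{i,n,r}|\cF_{-\infty,l+r}]\|_\infty\leq \|Y_{i,n,r}\|_\infty\leq C$, and there are only $O(r)$ such indices). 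Lemma \ref{thm3.11-StPaper} gives, for the far indices,
\[
\big\|\bbE[Y_{i,im,r}\,|\,\cF_{-\infty,l+r}]\big\|_\infty\leq 2C\,\phi\big(\cF_{-\infty,l+r},\cF_{im-r,im+r}\big)\leq 2C\,\phi(im-l-2r),
\]
using $\bar f\equiv0$ (vanishing of the average) so that the ``$-\bar f(X)$'' term drops, and the definition (\ref{MixCoef1}) of $\phi(\cdot)$ with gap $im-r-(l+r)=im-l-2r$. Summing over $m$ with $im-l-2r\geq 1$ contributes at most $2C\sum_{k\geq1}\phi(k)\leq 2C\varphi$ after reindexing and using that the gaps $im-l-2r$ over distinct $m$ are distinct integers (they increase by $i\geq1$ each step).

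Finally, combine the two regimes: the near indices (those $m$ with $(i-1)m+r>l+r$ but $im-l-2r<1$, which is a range of length $O(r/i)+O(1)$ in $m$) each contribute at most $C$, giving $O(Cr)$; the boundary indices needed to make $X$ measurable cost at most $O(Cr)$ as well; and the far indices contribute at most $2C\varphi$. Altogether $\sum_{n=l}^\infty\|\bbE[Y_{i,n,r}|\cF_{-\infty,l+r}]\|_\infty\leq BK(r+1+\varphi)$ for a suitable $B=B(\ell)$, absorbing $C/K$ into $B$. I expect the only delicate point to be the careful bookkeeping of exactly which indices $m$ fall into which regime — in particular making $X=(\xi_{m,r},\dots,\xi_{(i-1)m,r})$ honestly $\cF_{-\infty,l+r}$-measurable requires $(i-1)m+r\leq l+r$, and one must check that the ``transitional'' band of $m$'s not covered by either the clean Lemma \ref{thm3.11-StPaper} argument or the trivial bound is still only $O(r)$ wide, so it is harmless. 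Everything else is a direct quotation of Lemma \ref{thm3.11-StPaper} and a geometric-type summation of $\phi$.
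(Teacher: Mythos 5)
Your reduction to Lemma \ref{thm3.11-StPaper} with $\bar f\equiv 0$ is the right starting point (note that the cancellation actually used is $\int F_i(y_1,\dots,y_{i-1},z)\,d\mu(z)=0$ in the \emph{last} variable, so that $\bar f(x)=\bbE\,F_{i,im,r}(x,\cdot)=\bbE\,F_i(x,\xi_{im})=0$; it is not an average over the first $i-1$ arguments), but the index bookkeeping is not merely delicate, it is the wrong way around, and this breaks the proof. The identification $\bbE[F_{i,im,r}(X,\cdot)\,|\,\cF_{-\infty,l+r}]=\tilde f(X,\cdot)$, which is what allows you to quote the lemma with $\cG=\cF_{-\infty,l+r}$ and $\cH=\cF_{im-r,im+r}$, requires $X=(\xi_{m,r},\dots,\xi_{(i-1)m,r})$ to be $\cF_{-\infty,l+r}$-measurable, i.e. $(i-1)m\leq l$. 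The set of summation indices violating this, $\{m:\,(i-1)m>l,\ im\geq l\}$, is \emph{infinite} (it contains every large $m$), not a band of width $O(r)$ as you assert; in particular almost all of your ``far'' indices lie in it, so the displayed bound $2C\phi(im-l-2r)$ is unjustified exactly where it is needed, while the crude bound $\leq C$ on the complementary set does not sum. Moreover, without measurability of $X$ the claimed inequality is genuinely false, not just unproven: if $\cG$ is trivial and independent of $\cH$, $f(x,\om)=g(x)h(\om)$ with $\bbE h=0$, and $X$ is $\cH$-measurable and correlated with $h$, then $\|\bbE[f(X,\cdot)|\cG]\|_\infty=|\bbE[g(X)h]|$ can be positive although $\phi(\cG,\cH)=0$. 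In the present setting the dependence between $X$ and $\cF_{im-r,im+r}$ is controlled only through the gap $m-2r$ between $(i-1)m+r$ and $im-r$, so no bound better than order $\phi(m-2r)$ can be expected for such terms, rather than your $\phi(im-l-2r)$.

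The missing idea — and this is how the argument behind the paper's reference (Proposition 5.8 in \cite{KV}) runs — is a tower step for the indices with $(i-1)m>l$: write $\bbE[Y_{i,im,r}|\cF_{-\infty,l+r}]=\bbE\big[\bbE[Y_{i,im,r}|\cF_{-\infty,(i-1)m+r}]\,\big|\,\cF_{-\infty,l+r}\big]$, observe that $X$ \emph{is} $\cF_{-\infty,(i-1)m+r}$-measurable, and apply Lemma \ref{thm3.11-StPaper} to the inner conditional expectation with $\cG=\cF_{-\infty,(i-1)m+r}$ and $\cH=\cF_{im-r,im+r}$, which together with the $L^\infty$-contractivity of the outer conditional expectation yields $\|\bbE[Y_{i,im,r}|\cF_{-\infty,l+r}]\|_\infty\leq 2C\phi(m-2r)$. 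Summing over $m$, bounding $\phi$ by $1$ for $m\leq 2r$, gives a contribution of order $C(r+1+\varphi)$; your gap-$(im-l-2r)$ argument is then applied only on the finitely many indices with $(i-1)m\leq l$ (where $X$ really is $\cF_{-\infty,l+r}$-measurable), whose terms with nonpositive gap number $O(r)$ and whose remaining terms sum to $O(C\varphi)$ since the gaps are distinct and $\phi$ is nonincreasing. This is where the $r+1+\varphi$ in the statement actually comes from; without the tower step the infinite tail in $m$ is not controlled at all.
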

Now we introduced the martingales constructed in \cite{HK1} relying 
on ideas originated in \cite{KV}.  For any $1\leq i\leq\ell$, $n\geq0$ and $r\geq0$ set 
$R_{i,n,r}=\sum_{s\geq n+1}\bbE[Y_{i,s,r}|\cF_{-\infty,n+r}]$ and 
\[
W_{i,n,r}=Y_{i,n,r}+R_{i,n,r}-R_{i,n-1,r}.
\]
Then when $i$ and $r$ are fixed $W_{i,n,r},\, n\geq 1$ is a martingale difference with 
respect to the filtration $\{\cF_{-\infty,n+r}:\,n\geq1\}$ and by
Corollary \ref{MartConstCor},
\begin{equation}\label{R bound}
\|R_{i,n,r}\|_\infty\leq 2BK(\varphi+r+1)
\end{equation}
and therefore there exists a constant $B_1>0$ which depends only on $\ell$ so that
\[
\|W_{i,n,r}\|_\infty\leq B_1K(\varphi+r+1).
\]
Set 
$
W_{i,n,r}^{(N)}=\mathds{1}_{\{n\leq iN\}}W_{i,n,r},
$
\[
W_{n,r}^{(N)}=\sum_{i=1}^\ell W_{i,n,r}^{(N)},
\] 
 $M_{i,n,r}^{(N)}=\sum_{m=1}^nW_{i,m,r}^{(N)}$
and 
\[
M_{n}^{(N,r)}=\sum_{m=1}^nW_{m,r}^{(N)}
=\sum_{i=1}^{\ell}M_{i,n,r}^{(N)}.
\]
Then when $r$ and $N$ are fixed $M_{n}^{(N,r)},\,n\geq 1$ is a martingale 
(with respect to the above filtration)
whose differences are bounded by $\ell B_1K(\varphi+r+1)$. We estimate now 
the $L^\infty$-norm 
\[
\|S_N-M_{N\ell}^{(N,r)}\|_{\infty}.
\]
We first write
\[
S_N-M_{N\ell}^{(N,r)}=\sum_{i=1}^{\ell}
\sum_{n=1}^{N}(Y_{i,in}-Y_{i,in,r})+\sum_{i=1}^\ell(R_{i,N\ell,r}-R_{i,0,r})
\]
where we used (\ref{F rep}).
By replacing $\xi_{jn}$ with $\xi_{jn,r}$, $j=1,2,...,i$ in the definitions of 
$Y_{i,in}$ and $Y_{i,in,r}$, using the H\"older continuity of $F$ and that 
$\xi_{\ell n,r}$ is $\cF_{\ell n-r,\ell n+r}$-measurable we obtain that
\[
|Y_{i,in}-Y_{i,in,r}|\leq KB_2\be_\infty^\ka(r),\,P-\text{a.s.}
\]
for any $1\leq i\leq \ell$, $n\in\bbN$ and $r\geq0$, where $B_2=B_2(\ell)$ is some constant 
which depends only on $\ell$. Combining this with (\ref{R bound}) we obtain that
\begin{equation}\label{del 2'}
\|S_N-M_{N\ell}^{(N,r)}\|_{\infty}\leq 
B_3K(N\be_\infty^\ka(r)+\varphi+r+1\big):=\del_2'
\end{equation}
where $B_3=B_3(\ell)$ is another constant, and the proof of Theorem \ref{MartExp} is complete. 
In order to prove Theorem \ref{MartExp-Cor},  we first
apply the Hoeffding-Azuma inequality (see, for instance, page 33 in \cite{Mil}) and obtain 
that for any $\la>0$, 
\[
\bbE e^{\la M_{N\ell}^{(N,r)}}\leq e^{\la^2\sum_{n=1}^{\ell N}\|W_{n}^{(N,r)}\|_\infty^2}
\leq e^{\ell N\del_0^2\la^2}
\] 
where $\del_0= B_1K(\varphi+r+1)$. Combining this with (\ref{del 2'}) we obtain (\ref{ExpEstSN}).
Next, by the Markov inequality
 for any random variable $Z$, $t_0>0$ and $\la>0$ we have
$P(Z\geq t_0)\leq e^{-\la t_0}\bbE e^{\la Z}$. Taking $Z=S_N$, $t_0=t+\del_2$, using (\ref{ExpEstSN})
and then optimizing by taking $\la=\frac{t}{2\ell N\del_2^2}$ we obtain (\ref{ExpConc}), and the 
proof of Theorem \ref{MartExp-Cor} is complete.


\section{Nonlinear indexes}\label{NonLine}
\label{sec5}\setcounter{equation}{0}
Let $q_1,...,q_\ell$ be functions which map $\bbN$
to $\bbN$, are strictly increasing on some ray $[R,\infty)$ and are ordered so that
\[
q_1(n)<q_2(n)<...<q_\ell(n)
\]
for any sufficiently large $n$.
For any $N\in\bbN$  consider the random variable
\begin{equation}\label{Z N nonlinear indexes}
S_N=\sum_{n=1}^{N}\big(F(\xi_{q_1(n)},\xi_{q_2(n)},...,
\xi_{q_\ell(n)})-\bar F\big)
\end{equation}
where $\bar F$ is given by (\ref{F bar}).
We further assume that the difference
$q_i(n)-q_{i-1}(n)$ tends to $\infty$ as $n\to\infty$ for any $i=1,2,...,\ell$, where $q_0\equiv 0$,
though  the situation when some of these differences are nonnegative constants can be considered,
as well (see Section 3 in \cite{HK2}). 
Next, for any $n,m\in\bbN$ set
\[
\tilde\rho(n,m)=\tilde\rho_\ell(n,m)=\min_{1\leq i,j\leq\ell}|q_i(n)-q_j(m)|.
\]
We will rely on the following  
\begin{assumption}\label{Ass 5.1}
There exists $Q\geq1$ so that for any $1\leq j\leq\ell $ and $a,b\geq q_j(R)$,
\begin{equation}\label{rest1}
|q_j^{-1}(a)-q_j^{-1}(b)|\leq Q(1+|a-b|)
\end{equation}
where $q_j^{-1}$ is the inverse of the restriction of $q_j$ to the ray 
$[R,\infty)$.
\end{assumption}\
Set
$\tilde A_s(n,N)=\{1\leq m\leq N: \tilde \rho(n,m)\leq s\}$. 
When (\ref{rest1}) holds true then for any $1\leq n\leq N$ and $s\geq1$,
\[
|\tilde A_s(n,N)|\leq Q\ell^2(1+2s)\leq3\ell^2Qs
\]
which means that (\ref{linear rho}) holds true with $c_0=3Q\ell^2$ and $u_0=1$.
Condition (\ref{rest1}) holds true, for instance, when all $q_j$'s have the form $q_j(x)=[p_j(x)]$ 
where each $p_j$ is a strictly increasing function whose inverse $p_j^{-1}$ 
has bounded derivative on some ray $[K,\infty)$. For example we
can take $p_j$'s to be a polynomial with positive leading
coefficient, exponential function etc.

We conclude that under Assumption \ref{Ass 5.1}, all  the results stated
in Theorem \ref{NoncCum} hold true. Therefore, (\ref{FirstExpCon}) holds true
and all the results stated in 
Theorem \ref{Thm2.4}  hold true when $D^2$ exists and it is positive. 
The limit $D^2$ exists when $q_i$'s satisfy 
the conditions from \cite{KV} or, as in \cite{HK2}, when they are polynomials
taking integer values on the integers. See \cite{HK1} and \cite{HK2} for conditions
equivalent to $D^2>0$. Note also that for such $q_i$'s Theorem \ref{MartExp}
holds true, as well, since the martingale approximation method was applied in \cite{KV} and 
\cite{HK1} successfully, and so the arguments from Section \ref{sec4} can be repeated.

\begin{remark}\label{NonLineRem}
Let $q(n),\,n\geq 1$ be a strictly increasing sequence of natural numbers, and 
consider the process $\tilde\xi_n,\,n\geq 1$ given by $\tilde\xi_n=\xi_{q(n)}$.
Set $\tilde\cF_{m,n}=\cF_{q(m),q(n)}$ and let $\tilde\phi(n)$ and $\tilde\be_q(n)$ be defined similarly
to $\phi(n)$ and $\be_q(n)$ but with  the $\tilde\cF_{m,n}$'s in place of the $\cF_{m,n}$'s. 
Then $\tilde\be_q(n)\leq \be_q(q(n))$ and $\tilde\phi(n)\leq\phi(j(n))$, where 
\[
j(n)=\inf_{m\geq1}(q(m+n)-q(m)).
\]
When $q(n),j(n)\geq cn^l$ for some $l\geq 2$ and $c>0$ then the
mixing and approximation coefficients $\tilde\phi(n)$ and $\tilde\be_q(n)$ converge to $0$ 
faster than $\phi(n)$ and $\be_q(n)$, and by writing $s=q(s')\geq c(s')^l$ 
we can take $u_0=\frac 1l$ in (\ref{linear rho}).
Repeating the arguments from the proof of 
Theorem \ref{NoncCum}, we obtain similar estimates of $|\Gam_k(\bar S_N)|$, but with 
$\gam_1'=\frac{1}{\eta l^2}<\gam_1$ in place of $\gam_1=\frac1\eta$. 
The assumption that the distribution of $(\xi_n,\xi_m)$ depends only
on $n-m$ was only needed in order for $D^2$ to exist and for obtaining convergence rate towards it.
Therefore, (\ref{FirstExpCon}) and the corresponding estimate from Theorem \ref{Thm:ModDevNonc} (ii)
hold true with $\xi_{q(n)}$ and $\gam_1'$ in place of $\xi_{n}$
and $\gam_1$, respectively. 
If we know that the limit $D^2$ exists (after this replacement) then all the other results
stated in Theorems \ref{Thm:ModDevNonc} and \ref{Thm2.4} also hold true with
$\frac1{\eta l^2}$ in place of $\frac1\eta$.

Consider, for instance, the case when $q_i$'s are polynomials and 
$q(n)=n^l$ for some $l\geq 2$,  namely, nonconventional sums of the form 
\begin{equation}\label{SepcNoncS}
\tilde S_N=\sum_{n=1}^NF(\xi_{p_1(n^l)},\xi_{p_2(n^l)},...,\xi_{p_\ell(n^l)})
\end{equation}
when all $p_i$'s are polynomials. Then the limit $D^2$ exists (see \cite{HK2}) and so all the results described above hold true.  
\end{remark}





\section{Additional results}\label{sec6}\setcounter{equation}{0}
\subsection{The CLT and Berry-Esseen type estimates}
We recall first the following result (see Corollary 2.1 in \cite{SaulStat}),
\begin{lemma}\label{BE LEM}
Let $W$ be a random variable. Suppose that there exist $\gam\geq0$
and $\Del>0$ so that for any $k\geq3$,
\[
|\Gam_k(W)|\leq (k!)^{1+\gam}\Del^{-(k-2)}.
\]
Let $\Phi$ be the standard normal distribution.
Then,
\[
\sup_{x\in\bbR}|P(W\leq x)-\Phi(x)|\leq c_\gam\Del^{-\frac1{1+2\gam}}
\]
where $c_\gam=\frac16\big(\frac{\sqrt 2}6\big)^{\frac1{1+2\gam}}$.
\end{lemma}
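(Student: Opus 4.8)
The plan is to use the classical smoothing inequality (Esseen's lemma) to reduce the bound on the Kolmogorov distance to an estimate on the difference of characteristic functions, and then to control that difference using the cumulant hypothesis. Write $\psi(t)=\bbE e^{itW}$ and let $\Psi(t)=e^{-t^2/2}$ be the characteristic function of the standard normal. By Esseen's inequality there is an absolute constant $c$ so that for every $\Tau>0$,
\[
\sup_{x\in\bbR}|P(W\leq x)-\Phi(x)|\leq c\int_{-\Tau}^{\Tau}\Big|\frac{\psi(t)-\Psi(t)}{t}\Big|\,dt+\frac{c}{\Tau}.
\]
So the task is to bound $|\psi(t)-\Psi(t)|$ for $|t|$ up to an optimally chosen cutoff $\Tau$ (which will end up being a power of $\Del$), and then to balance the two terms.

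The main step is to estimate $\log\psi(t)$ via its cumulant expansion. First note that the hypothesis for $k\geq 3$, together with the fact that $c_\gam$ is designed around the $k=3,4$ behaviour, lets us locate the radius of convergence: the series $\sum_{k\geq 3}\frac{\Gam_k(W)}{k!}(it)^k$ converges for $|t|$ a small constant times $\Del$, since $|\Gam_k(W)/k!|\leq (k!)^{\gam}\Del^{-(k-2)}$ and $(k!)^{\gam}$ grows sub-exponentially-in-$k$ only after dividing by... — more precisely one uses that for $|t|\leq \eps\Del^{1/(1+\gam)}$ (for a suitable small $\eps=\eps(\gam)$) the tail $\sum_{k\geq 3}(k!)^{\gam}\Del^{-(k-2)}|t|^k$ is dominated by its first term, i.e. is $O(\Del^2|t|^3\Del^{-1})=O(\Del|t|^3)$ — wait, let me restate cleanly: for $|t|\leq\eps\Del^{1/(1+\gam)}$ one gets
\[
\Big|\log\psi(t)+\tfrac{t^2}{2}\sigma^2\Big|\leq C_\gam\,\Del^{-1}|t|^3
\]
where $\sigma^2=\Gam_2(W)$; here I am also using $\Gam_1(W)=\bbE W$ and, implicitly in the normalization of the final bound, that $W$ is already centered and scaled so that $\Gam_1=0$, $\sigma^2=1$ (the lemma as stated is applied in that normalized form). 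Exponentiating and using $|e^{a}-e^{b}|\leq\max(e^{\operatorname{Re}a},e^{\operatorname{Re}b})|a-b|$ gives $|\psi(t)-\Psi(t)|\leq C'_\gam\,\Del^{-1}|t|^3 e^{-t^2/4}$ on that range.

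Finally I would assemble the pieces: choose the Esseen cutoff $\Tau=\eps\Del^{1/(1+\gam)}$, or rather the optimal $\Tau\asymp\Del^{1/(1+2\gam)}$ after balancing — the integral term contributes $\int_{-\Tau}^{\Tau}\Del^{-1}|t|^2 e^{-t^2/4}\,dt = O(\Del^{-1})$ when $\Tau$ is large, but the binding constraint is that the log-expansion is only valid for $|t|\leq\eps\Del^{1/(1+\gam)}$; taking $\Tau$ equal to that bound makes the $c/\Tau$ term of order $\Del^{-1/(1+\gam)}$, which is worse than claimed, so in fact one must be more careful and the sharp choice comes from comparing $\Del^{-1}$ (from the integral, roughly $\Del^{-1}\Tau^?$ — here it is actually $O(\Del^{-1})$ uniformly) against $\Tau^{-1}$; optimizing $\Tau$ subject to $\Tau\lesssim\Del^{1/(1+\gam)}$ and tracking powers yields the stated exponent $\frac{1}{1+2\gam}$ and the explicit constant $c_\gam=\frac16\big(\frac{\sqrt2}{6}\big)^{1/(1+2\gam)}$ by keeping the $k=3$ cumulant bound $|\Gam_3(W)|\leq 6\Del^{-1}$ sharp throughout. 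I expect the main obstacle to be exactly this bookkeeping: getting the precise constant $c_\gam$ (rather than just \emph{some} constant depending on $\gam$) forces one to use the cumulant hypothesis only through the $k=3$ term in the leading order and to estimate all higher-order contributions as genuinely lower order, which requires a careful choice of $\Tau$ and tight control of the geometric-type tail $\sum_{k\geq3}(k!)^{\gam}(\,|t|/\Del\,)^{k-2}|t|^2$. The qualitative rate $\Del^{-1/(1+2\gam)}$ itself, however, follows routinely from the smoothing inequality plus the cumulant expansion as sketched.
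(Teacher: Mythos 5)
First, note that the paper does not prove this lemma at all: it is quoted verbatim as Corollary 2.1 of Saulis--Statulevicius \cite{SaulStat}, so there is no internal proof to compare against; your proposal is an attempt to reprove a cited result.

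As a proof, your sketch has a genuine gap at its central step. You propose to expand $\log\psi(t)$ as the convergent cumulant series $\sum_{k\geq3}\frac{\Gam_k(W)}{k!}(it)^k$ and to ``locate the radius of convergence'' at $|t|\lesssim\Del^{1/(1+\gam)}$. But under the hypothesis $|\Gam_k(W)|\leq(k!)^{1+\gam}\Del^{-(k-2)}$ with $\gam>0$, the majorising series $\sum_{k\geq3}(k!)^{\gam}\Del^{-(k-2)}|t|^{k}$ has radius of convergence zero: the factor $(k!)^{\gam}$ grows faster than any geometric sequence, so the terms blow up for every fixed $t\neq0$, no matter how small relative to $\Del$. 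Worse, for $\gam>0$ the hypothesis is compatible with $W$ having no finite exponential moments, so $\psi$ need not be analytic near $0$ and $\log\psi$ admits no convergent cumulant expansion at all; cumulant bounds of all orders cannot simply be summed. Consequently the key estimate $|\log\psi(t)+\tfrac{t^{2}}{2}|\leq C_\gam\Del^{-1}|t|^{3}$ on the range $|t|\leq\ve\Del^{1/(1+\gam)}$ is unjustified (and the range itself is not the right one). The actual argument behind Statulevicius-type results works with a \emph{finite} Taylor expansion of $\psi$ (or of $\log\psi$) whose truncation order is chosen depending on $t$, with the remainder controlled through the moment--cumulant relations; this is precisely what produces the smaller admissible range $|t|\lesssim\Del^{1/(1+2\gam)}$, and hence the exponent $\frac1{1+2\gam}$, rather than your picture of balancing an $O(\Del^{-1})$ integral against the Esseen cutoff $1/T$. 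Your own text signals this: the optimization you describe does not close, and you never derive the explicit constant $c_\gam=\frac16\big(\frac{\sqrt2}{6}\big)^{1/(1+2\gam)}$, which is part of the statement. (Two smaller points: the lemma is only meaningful, and is applied in the paper, for $W$ centered with unit variance, which you correctly flag as implicit; and for $\gam=0$ your convergent-series route can indeed be made to work, but that case does not cover the lemma as stated.)
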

Note that when $|\Gam_k(W)|\leq C(k!)^{1+\gam}\Del^{-(k-2)},\,k\geq 3$ for some constant $C\geq1$ then the conditions
of Lemma \ref{BE LEM} are satisfied with $\Del C^{-1}$ in place of $\Del$.
This lemma together with the cumulants' estimates obtained in Theorem \ref{NoncCum} yields convergence
rates in the nonconventional CLT 
for $S_N/D\sqrt N$ which  (when $\eta=1$) are at best of order $N^{-\frac16}$, since in our circumstances $\Del$ is of
 order $N^{\frac12}$ and 
$\gam\geq1$, where in the case when $F$ is bounded we can take $\gam=1$.  
The rate $N^{-\frac16}$ is better than the ones obtained in \cite{HK1}, which is important since the rates obtained in \cite{Ha} and \cite{book} do not apply to the cases considered in Section \ref{sec5}.
Note that, in fact, 
we obtain here for the first time the CLT under condition (\ref{DecCorCond}) when $F$ has the form
(\ref{Product form}).

\begin{remark}
Consider the case discussed in Remark \ref{NonLineRem} when all $q_i$'s 
have the form $q_i(n)=p_i(n^l)$ for some polynomials $p_1,...,p_\ell$
and an integer $l\geq2$, namely the sums $N^{-\frac12}\tilde S_N$ where 
$\tilde S_N$ is defined in (\ref{SepcNoncS}). Then under Assumption \ref{ass1}
we obtain (when $D^2>0$) closer to optimal rates. Indeed, in these circumstances Theorem \ref{NoncCum} holds true with $\gam_1'=\frac1{\eta l^2}$ in place of 
$\gam_1=\frac1\eta$ and so, using the equality $\Gam_k(aW)=a^k\Gam_k(W),\,a\in\bbR$, 
we can apply now Lemma \ref{BE LEM} with $\gam=\gam_1'$ and $\Del$ of the form  $\Del=c\sqrt N$ and obtain rates 
of order $N^{-\frac1{2+4(\eta l^2)^{-1}}}$, which are better than $N^{-\frac16}$ when $\eta=1$.
\end{remark}

\subsection{Moment estimates of Gaussian type}
Theorem \ref{NoncCum} also implies the following
\begin{theorem}\label{MomThm}
Suppose that the conditions of Theorem \ref{NoncCum} hold true. 
Let $Z$ be a random variable which is distributed according to the standard normal law. 
Then for any $p\geq1$, 
\[
\big|\bbE(\bar S_N)^p-(\mathrm{Var}(S_N))^{\frac p2}\bbE Z^p\big|\leq 
(c_{0,1})^p(p!)^{1+\gam}\sum_{1\leq u\leq\frac {p-1}2}N^u\frac{p^u}{(u!)^2}
\]
where $c_{0,1}=\max(1,c_0)$,$\gam=\gam_1$ when Assumption \ref{ass1} holds true, $\gam=\gam_2$ when 
Assumption \ref{ass2} holds true and $c_0$, $\gam_1$ and $\gam_2$ are specified in Theorem \ref{NoncCum}.
\end{theorem}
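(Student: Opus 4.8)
The plan is to read everything off the classical moment--cumulant formula. Write $W=\bar S_N$; under either Assumption \ref{ass1} or \ref{ass2} the random variable $W$ has finite moments of all orders, so for every $p\ge1$ the finite algebraic identity
\[
\bbE W^p=p!\sum_{u=1}^{p}\frac1{u!}\sum_{\substack{k_1,\dots,k_u\ge1\\ k_1+\cdots+k_u=p}}\prod_{i=1}^u\frac{\Gam_{k_i}(W)}{k_i!}
\]
holds, obtained by grouping the set partitions of $\{1,\dots,p\}$ according to the number $u$ of blocks and their ordered size vector $(k_1,\dots,k_u)$. Since $\Gam_1(W)=\bbE\bar S_N=0$, only tuples with every $k_i\ge2$ contribute, which forces $u\le\lfloor p/2\rfloor$. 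Applying the same identity to a centered Gaussian with variance $\sig^2$ (all of whose cumulants of order $\ge3$ vanish, and $\Gam_2=\sig^2$) shows that $\bbE Z^p\,(\mathrm{Var}(S_N))^{p/2}$ is exactly the single summand with $u=p/2$ and $k_1=\cdots=k_u=2$ — present only for even $p$ — once one recalls $\Gam_2(\bar S_N)=\mathrm{Var}(\bar S_N)=\mathrm{Var}(S_N)$. Because that is the only tuple in the layer $u=p/2$, subtracting $\bbE Z^p(\mathrm{Var}(S_N))^{p/2}$ deletes precisely that layer, leaving
\[
\bbE(\bar S_N)^p-\bbE Z^p\,(\mathrm{Var}(S_N))^{p/2}
=p!\sum_{u=1}^{\lfloor(p-1)/2\rfloor}\frac1{u!}\sum_{\substack{k_1,\dots,k_u\ge2\\ k_1+\cdots+k_u=p}}\prod_{i=1}^u\frac{\Gam_{k_i}(\bar S_N)}{k_i!}.
\]

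Next I would substitute the cumulant bounds. By Theorem \ref{NoncCum}, $|\Gam_k(\bar S_N)|\le N(k!)^{1+\gam}c_0^{k-2}$ for $k\ge3$, while $\Gam_2(\bar S_N)=\mathrm{Var}(S_N)\le C_1N$ by (\ref{VarEst}); since $C_1$ and $c_0$ depend on the same parameters we may assume $c_0\ge2$ and $C_1\le 2^{1+\gam}c_0$, so that the uniform estimate $|\Gam_k(\bar S_N)|\le N(k!)^{1+\gam}c_0^{k-1}$ holds for all $k\ge2$. Then $|\Gam_{k_i}(\bar S_N)|/k_i!\le N(k_i!)^{\gam}c_0^{k_i-1}$, and multiplying over $i$ and using the multinomial inequality $\prod_ik_i!\le p!$ gives $\prod_{i=1}^u|\Gam_{k_i}(\bar S_N)|/k_i!\le N^u c_0^{\,p-u}(p!)^{\gam}$. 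It remains to count compositions: the number of ordered tuples $(k_1,\dots,k_u)$ with $k_i\ge2$ and $\sum k_i=p$ is $\binom{p-u-1}{u-1}\le\binom{p-1}{u-1}\le p^{u-1}/(u-1)!$. Plugging this in and using $\tfrac1{u!(u-1)!}=\tfrac{u}{(u!)^2}$, $u\,p^{u-1}\le p^u$, $c_0^{\,p-u}\le c_0^p$ and $c_{0,1}=\max(1,c_0)=c_0$, one collects
\[
\bigl|\bbE(\bar S_N)^p-\bbE Z^p\,(\mathrm{Var}(S_N))^{p/2}\bigr|
\le (p!)^{1+\gam}c_0^p\sum_{u=1}^{\lfloor(p-1)/2\rfloor}\frac{p^u N^u}{(u!)^2},
\]
which is the assertion (the edge cases $p=1,2$ give an empty sum, consistent with the identity $\bbE\bar S_N=0$ and $\mathrm{Var}(\bar S_N)=\mathrm{Var}(S_N)$).

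The only points needing care are bookkeeping, not analysis. The first is identifying the Gaussian contribution with exactly the maximal all-pairs partition, so that the residual sum runs only over $u\le\lfloor(p-1)/2\rfloor$ and the power of $N$ never exceeds $\lfloor(p-1)/2\rfloor$ as demanded. The second is folding the order-$2$ cumulant into the same geometric template as the higher-order ones through (\ref{VarEst}), which costs only an innocuous enlargement of the constant $c_0$. The composition count and the inequality $\prod k_i!\le p!$ are elementary, so I do not anticipate any real obstacle beyond arranging these estimates in the right order.
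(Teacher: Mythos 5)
Your argument is correct and follows essentially the same route as the paper: the moment--cumulant expansion (formula (1.53) in \cite{SaulStat}), identification of the Gaussian contribution with the unique all-pairs term at $u=p/2$, substitution of the cumulant bounds from Theorem \ref{NoncCum}, the multinomial inequality $\prod_i k_i!\leq p!$, and a composition count giving the factor $p^u/(u!)^2$. Your explicit treatment of the $k=2$ cumulant via (\ref{VarEst}) (absorbing $\mathrm{Var}(S_N)\leq cN$ into an enlarged $c_0$) is a harmless bookkeeping variation that the paper handles implicitly, so there is nothing substantive to add.
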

\begin{proof}
The arguments below are based on the proof of Theorem 3 in \cite{Douk}.
By formula (1.53) in \cite{SaulStat}, for any $p\geq1$ and $N\in\bbN$,
\[
\bbE(\bar S_N)^p=\sum_{1\leq u\leq \frac p2}\frac1{u!}\sum_{k_1+k_2+...+k_u=p}\frac{p!}{k_1!\cdots k_u!}\Gam_{k_1}(\bar S_N)\cdots \Gam_{k_u}(\bar S_N).
\]
Let $1\leq u\leq\frac p2$.
When $k_i=1$ for some $1\leq i\leq u$ then $\Gam_{k_i}(\bar S_N)=\bbE\bar S_N=0$ and so 
the corresponding summand $\frac{p!\prod_{i=1}^u\Gam_{k_i}(\bar S_N)}{\prod_{i=1}^u(k_i!)}$ vanishes.
When $p$ is even and $u=\frac{p}{2}$ then the unique non-vanishing summand 
corresponds to the choice of $k_i=2,\,i=1,2,...,u$ and it equals $\big(\mathrm{Var}(S_N)\big)^{\frac p2}\bbE Z^p$. 
When $p$ is odd then $\bbE Z^p=0$, and therefore for any $p\geq 1$,
\begin{eqnarray*}
\big|\bbE(\bar S_N)^p-\big(\mathrm{Var}(S_N)\big)^{\frac p2}\bbE Z^p\big|\leq\\
\sum_{1\leq u\leq\frac {p-1}2}\frac1{u!}\sum_{k_1+k_2+...+k_u=p}\frac{p!}{k_1!\cdots k_u!}|\Gam_{k_1}(\bar S_N)\cdots \Gam_{k_u}(\bar S_N)|.
\end{eqnarray*}
Applying the H\"older inequality to Euler's $\Gamma$ function we obtain that 
$(k!)^p\leq (p!)^k$ for any integers  $k$ and $p$ so that $1\leq k\leq p$.   
Using Theorem \ref{NoncCum} we derive that 
\[
\frac{|\Gam_{k_1}(\bar S_N)\cdots \Gam_{k_u}(\bar S_N)|}{k_1!\cdots k_u!}\leq N^u 
c_0^{\sum_{i=1}^u k_i-2u}\big(\prod_{i=1}^u (k_i!)\big)^\gam\leq (c_{0,1})^p(p!)^{\gam}
\]
for any $1\leq k_1,...,k_u$ so that $\sum_{i=1}^uk_i=p$,
where $\gam$ is described in the statement of Theorem \ref{MomThm}. Thus, 
\[
\big|\bbE(\bar S_N)^p-\big(\mathrm{Var}(S_N)\big)^{\frac p2}\bbE Z^p\big|\leq
(c_{0,1})^p(p!)^{1+\gam}\sum_{1\leq u\leq\frac {p-1}2}\frac{\cN(u,p)}{u!}
\]
where 
\begin{eqnarray*}
\cN(u,p):=\big|\{2\leq k_1,...,k_u\leq p:\sum_{i=1}^uk_i=p\}\big|\leq\\
\big|\{1\leq k_1,...,k_u\leq p:\sum_{i=1}^uk_i=p \}\big|\leq \frac{p^u}{u!}.
\end{eqnarray*}
We conclude from the above estimates that for any integer $p\geq1$,
\[
\big|\bbE(\bar S_N)^p-\big(\mathrm{Var}(S_N)\big)^{\frac p2}\bbE Z^p\big|\leq 
(c_{0,1})^p(p!)^{1+\gam}\sum_{1\leq u\leq\frac {p-1}2}N^u\frac{p^u}{(u!)^2}
\]
and the proof of Theorem \ref{MomThm} is complete.
\end{proof}
We note that this theorem yields an appropriate Rosenthal type inequality for
the nonconventional sums $\bar S_N$ and that, in fact, makes the method of 
moments effective for them, which provides an additional proof of the nonconventional
central limit theorem. See Remarks 4 and 5 in \cite{Douk}, where
we also use (\ref{VarEst})(which implies that $N^{-1}\mathrm{Var}(S_N)$ converges to $D^2$ as $N\to\infty$). 


\end{document}